\newtheorem{lemma}{Lemma}[section]
\newtheorem{theorem}{Theorem}[section]
\date{}
\begin{document}

\title{Dynamic Programming Principle for Backward Doubly Stochastic Recursive Optimal Control Problem and Sobolev Weak Solution of The Stochastic Hamilton-Bellman Equation
\thanks{This work is supported by National Natural Science Foundation of China (11501532 and 11301530),
        the Natural Science Foundation of Shandong Provence (ZR2015AQ004).}
\author{Yunhong Li$^a$ \hspace{1cm}Anis.Matoussi$^b$\hspace{1cm}Lifeng Wei$^c$\hspace{1cm}Zhen Wu$^d$\\
\small\em $^a$School of Mathematical Sciences, Ocean University of China,
\small\em Qingdao {\rm 266003}, P. R. China,\\
\small \em E-mail address:  liyunhong9970@stu.ouc.edu.cn (Y.Li)\\
\small\em $^b$University of Maine, Risk and Insurance Institute of Le Mans, Laboratoire Manceau de Math¨¦matiques,\\
\small\em Avenue Olivier Messiaen\\
\small \em E-mail address: anis.matoussi@univ-lemans.fr (A.Matoussi)\\
\small\em $^c$School of Mathematical Sciences, Ocean University of China,
\small\em Qingdao {\rm 266003}, P. R. China,\\
\small \em Corresponding author E-mail address: weilifeng@ouc.edu.cn (L.Wei)\\
\small\em $^d$Shandong University,
\small\em Jinan {\rm 250100}, P. R. China\\
\small  \em E-mail address: wuzhen@sdu.edu.cn (Z.Wu)
}}
\maketitle
\begin{abstract}
In this paper, we study backward doubly stochastic recursive optimal
control problem where the cost function is described by the
solution of a backward doubly stochastic differential equation. We
give the dynamical programming principle for this kind of optimal
control problem and show that the value function is the unique Sobolev weak solution for the
corresponding stochastic Hamilton-Jacobi-Bellman equation.
\end{abstract}
\vspace{2mm}

\textbf{Keywords:} Backward double stochastic differential equation;
Dynamic programming principle; Recursive optimal control;
Hamilton-Jacobi-Bellman equation; Sobolev weak solution

\section{Introduction}
Backward stochastic differential equation (BSDE in short) has
been introduced by Pardoux and Peng [3]. Independently, Duffie and
Epstein [2] introduced BSDE from economic background. In [2] they
presented a stochastic differential recursive utility which is an
extension of the standard additive utility with the instantaneous
utility depending not only on the instantaneous consumption rate but
also on the future utility. The recursive optimal control problem is
presented as a kind of optimal control problem whose cost functional
is described by the solution of BSDE. In [4] they gave the
formulation of recursive utilities and their properties from the
BSDE point of view. In 1992, Peng [6] got the Bellman's dynamic
programming principle for this kind of problem and proved that the
value function is a viscosity solution of one kind of quasi-linear
second-order partial differential equation (PDE in short) which is
the well-known as Hamilton-Jacobi-Bellman equation. Later in 1997£¬
he virtually generalized these results to a much more general
situation, under Markvian and even Non-Markvian framework. In this
chinese version, Peng used the backward semigroup property
introduced by a BSDE under Markovian and Non-Markovian framework. He
also proved that the value function is a viscosity solution of a
generalized Hamilton-Jacobi-Bellman equation. In 2007, Wu and Yu
[7] gave the dynamic programming principle for one kind of
stochastic recursive optimal control problem with the obstacle
constraint for the cost functional described by the solution of a
reflected BSDE and showed that the value function is the unique
viscosity solution of the obstacle problem for the corresponding
Hamilton-Jacobi-Bellman equation.

  In 1994, Pardoux and Peng first studied the backward doubly stochastic
differential equations(BDSDE in short). There are  two different
directions of stochastic integral in the equations involving with two independent standard Brownian motions: a
standard (forward) $dWt$ and a backward $dBt$. They had proved existence and uniqueness
result of this equation and established the connection
between BDSDE and a classical solution for stochastic partial
differential equation (SPDE in short) under  smoothness assumption
on the coefficients. And then, Bally and Matoussi [1] gave the probabilistic representation of the
solution in Sobolev space of semilinear stochastic PDE¡¯s in terms of BDSDE.
 Shi and Gu [16] gave the comparison theorem of BDSDE. Then Auguste and Modeste [10] got the uniqueness and existence of reflected BDSDE's solutions.

In our paper, we study a stochastic recursive optimal control problem where the control system is described by the classical stochastic differential equation, however, the cost function
is described by the solution of a backward doubly stochastic differential equation. This kind of recursive optimal control problem has some practical meaning.
For example,
in an arbitrage-free incomplete financial market, there may exist so called informal trading such as ``insider trading''.
An individual  has access to insider information would have an unfair edge over other investors, who do not have the same access, and could potentially make larger `unfair' profits than their fellow investors.
This phenomenon could be described by a BDSDE in a financial market models.
More specifically, there are two kinds of investors with different levels of information about the future price evolution in a market influenced by an additional source of randomness.
The ordinary trader only has the ``public information''---market prices of the underlying assets contained in the filtration $\mathcal{F}_{t}^{W}$. However, an insider who has assess to a larger filtration $\mathcal{F}_{t}^{W} \vee \mathcal{F}_{t, T}^{B}$, which includes insider information. For instance, an insider knows the functional law of the price process or he knows in advance that a significant change has occurred in the business policy or scope of a security issue or he could estimate if his portfolio is better than others.
We would like to emphasize that BDSDE techniques provide powerful instruments to analyze the problem  of portfolio  optimization of an insider trader. For an insider trader, his investment strategy still satisfies the property that locally optimal is equal to  globally optimal.

The problem we are most interested in is whether the dynamic programming principle still holds for this recursive optimal control problem.
The good news is that it can be accomplished by the properties of the BDSDE. Compared with the Hamilton-Jacobi-Bellman (HJB in short) equation in paper[6][7],  the corresponding HJB we get is a SPDE in a Markovian framework.
In the stochastic case where the diffusion is possibly degenerate, the HJB equation may in general have no classical solution. To overcome this difficulty, Crandall and Lions introduced the so-called viscosity solutions in the early 1980s. Obviously, the research on the viscosity solution on HJB equations have yielded fruitful results. However, the viscosity solution of the HJB equation cannot give an reasonable probabilistic interpretation on a pair of solution (Y,Z) of BSDE considering that relationship do not established between the Z part of the solution and the HJB equation. Here, we study a different kind of weak solution for HJB equations in a Sobolev space, in which part Z is spontaneously contained in the weak definition. Wei and Wu [11] have proved that the value function is the unique Sobolev weak solution of the related HJB equation by virtue of the nonlinear Doob-Meyer decomposition theorem introduced in the study of BSDEs.

  In this paper, we consider the issue on Sobolev weak solution of HJB equation connected with BDSDE. Since that we cannot find a Doob-Meyer decomposition theorem in BDSDE, it is a point that how to establish the equation like Lemma 4.1. and 4.2. in \cite{WWZ}. 
   Inspired by the \cite{AM2007} £¬we bring a increasing process into the equation in order to push the cost functional upward in a minimum force.

The paper is organized as follows. Preliminaries and assumption are introduced in Section 2. In Section 3 we formulate a stochastic recursive optimal control problem where the cost function is
described by the solution of a BDSDE. We show that the celebrated dynamic
programming principle still holds for this kind of optimization
problem. In Section 4 we prove that the value function of this
problem is the unique weak solution in a Sobolev space for the
corresponding stochastic Hamilton-Jacobi-Bellman equation.

\vspace{2mm}

\section{Preliminaries and assumption}
\setlength{\parindent}{2em}In this section, we give some preliminary results of the BDSDE
which are useful for the dynamic programming principle for the
recursive optimal control problem.

\setlength{\parindent}{2em}Let $(\Omega, \mathcal{F}, \mathcal{P})$ be a probability space, and $T>0$ be an arbitrarily fixed constant throughout this paper. Let $\left\{W_{t} ; 0 \leq t \leq T\right\}$ and $\left\{B_{t} ; 0 \leq t \leq T\right\}$ be two mutually independent standard Brownian Motion processes with values respectively in $R^{d}$ and $R^{l}$, defined on $(\Omega, \mathcal{F}, \mathcal{P})$. Let $\mathcal{N}$ denote the class of $P$ - null sets of $\mathcal{F}$. For each $t \in[0, T]$, we define
$$
\mathcal{F}_{t}:=\mathcal{F}_{t}^{W} \vee \mathcal{F}_{t, T}^{B}\,,
$$
where for any process $\left\{\eta_{t}\right\}, \mathcal{F}_{s, t}^{\eta}=\sigma\left\{\eta_{r}-\eta_{s} ; s \leq r \leq t\right\} \vee \mathcal{N}, \mathcal{F}_{t}^{\eta}=\mathcal{F}_{0, t}^{\eta}$. Let $\mathcal{F}_{t^{\prime}}^{t}$ be the complete filtration generated by the Brownian motion $W_{t^{\prime}}-W_{t},$ so $\mathcal{F}_{t^{\prime}}^{t}=\sigma\left\{W_{r}-W_{t}, t \leq r \leq t^{\prime}\right\} \vee \mathcal{N}$.\\
Note that the collection $\left\{\mathcal{F}_{t} ; t \in[0, T]\right\}$ is neither increasing nor decreasing, so it does not constitute a filtration.\\
   Let us introduce some notations:
\vspace{2mm}

$\begin{array}{rl} \mathcal L^{p}&=\{\xi \textrm
{\,is\,an\,}\mathcal F_{T}-\textrm
                {measurable\,random\,variable}\,s.t.\,E(|\xi|^{p}) <+\infty,\quad p\geq2\},\\
\mathcal H^{p}&=\left \{\{\psi_{t},\ 0\leq t\leq T\}\  \textrm {is\
                a\ predictable\ process\ s.t.}\ E(\int^T_0|\varphi_{t}|^{2}dt)^{\frac{p}{2}}<+\infty,\quad p\geq2\right\},\\
\mathcal S^{p}&=\left\{\{\varphi_{t},\,0\leq t\leq T\}\,\textrm
                {is\,a\,predictable\,process\,s.t.}\,E(\sup\limits _{0\leq t\leq T}|\varphi_{t}|^{p})<+\infty,\quad p\geq2\right\}.
\end{array}$

and the following BDSDE:
\begin{eqnarray}
Y_{t}=\xi+\int^T_tf(s,Y_{s},Z_{s})ds+\int^T_tg(s,Y_{s},Z_{s})d{B}_{s}-\int^T_tZ_{s}dW_{s},\quad
0\leq t\leq T.
\end{eqnarray}
Here
$$
f:\Omega\times[0,T]\times\mathbb{R}^{k}\times\mathbb{R}^{k\times
d}\rightarrow \mathbb{R}^{k}\hspace{2mm},
$$
$$g:\Omega\times[0,T]\times\mathbb{R}^{k}\times\mathbb{R}^{k\times
d}\rightarrow \mathbb{R}^{k\times l},
$$
and $f,g$ satisfying\\
(H1)\,for all $(y,z)\in\mathbb{R}\times\mathbb{R}^{d}$,
$$f(\cdot,y,z)\in M^{P}(0,T;\mathbb{R}^{k});\,
g(\cdot,y,z)\in M^{P}(0,T;\mathbb{R}^{k\times l}),$$
 (H2)\,for some
$L>0$ and $0<\alpha<1$ all
$y,y'\in\mathbb{R},z,z'\in\mathbb{R}^{d},a.s.$
$$
|f(t,y,z)-f(t,y',z')|^2\leq L(|y-y'|^2+\|z-z'\|^2),\quad
\|g(t,y,z)-g(t,y',z')\|\leq L|y-y'|^2+\alpha\|z-z'\|^2.
$$
 There exists $C$ such that for all $(t,x,y,z,v)\in[o,T]\times R^d\times R^{k\times
 d}$,
 $$gg^\ast(t,x,y,z)\leq zz^\ast+C(\|g(t,x,o,o)\|^2+|y|^2)I.$$
(H3)\,$\xi\in L^{p}$.

  We notice that there are two independent Brownian motions $W$ and
$B$ in (1), where the $dW$ integral is a formed It\^{o}'s integral
and $dB$ integral is a backward It\^{o}'s integral. The extra noise
$B$ in the equation can be thought of as some extra information that
can not be detected in the market in general, but is available to
the particular investor. The problem then is to show how this
investor can take advantage of such extra information to optimize
the utility, but by taking actions that are completely ``legal", in
the sense that the investor has to choose the optimal strategy in
the usual class of the admissible portfolios.

Then form Theorem 1.1 in [5], then there exists a unique solution
$\{(Y_{t},Z_{t}),0\leq t\leq T\}\in\mathcal
{S}^{p}(0,T;\mathbb{R}^{k})\times\mathcal
{H}^{p}(0,T;\mathbb{R}^{k\times d})$.

Now we give two more accurate estimates of the solutions. They are
very important and necessary for the dynamic programming principle
of our optimal control problem and play an important role for the
continuation properties of value function $u(t,x)$ about $t$ and
$x$. The proof is complicated and technical, some technique derive
from [1]. \vspace{2mm}

{\bf Proposition 2.1}\ \quad{\it Let$\{(Y_{t},Z_{t})\in\mathcal
{S}^{p}(0,T;\mathbb{R}^{k})\times\mathcal
{H}^{p}(0,T;\mathbb{R}^{k\times d}),\quad 0\leq t\leq T\}$ be the
solution of the above
 BDSDE, then for some $p>2$, $\xi\in L^{p}(\Omega,\mathcal {F}_{T},T;\mathbb{R}^{k})$ and
 $$ E\int^T_0(|f(t,0,0)|^{p}+\|g(t,0,0)\|^{p})dt<\infty, $$
 we have}
\begin{eqnarray}
E\left \{\sup_{t\leq s\leq T}|Y_{s}|^{p}+(\int^T_0
\|Z_{s}\|^{2}dt)^{\frac{p}{2}}\right \}<\infty.
\end{eqnarray}

\vspace{1mm} {\bf Proposition 2.2}\quad {\it Let $(\xi,f,g)$  and
$(\xi',f',g')$ be two triplets satisfying the above assumption.
Suppose $(Y,Z)$ is the solution of the BDSDE $(\xi,f,g)$ and
$(Y',Z')$ is the solution of the BDSDE $(\xi',f',g')$. Define
$$\triangle \xi=\xi-\xi',\quad \triangle f=f-f',\quad \triangle g=g-g',$$
$$\triangle Y=Y-Y',\quad \triangle Z=Z-Z'.$$
Then there exists a constant $C$ such that}

\begin{eqnarray}
E\left \{\sup_{t\leq s\leq T}|\triangle Y_{s}|^{p}+(\int^T_0
|\triangle Z_{s}|^{2}dt)^{\frac{p}{2}}\right \}\leq
CE\left\{|\triangle \xi|^{p}\right\}.
\end{eqnarray}

\section{Formulation of the problem and the Dynamic Programming
Principle}

\setlength{\parindent}{2em}In this section, we first formulate a backward doubly
stochastic recursive optimal control problem, and then we prove that
the dynamic programming principle still holds for this kind of
optimization problem.

\setlength{\parindent}{2em}We introduce the admissible control set $\mathcal {U}$
defined by
$$
\mathcal {U}:=\{v(\cdot)\in\mathcal{H}^{p}|\,v(\cdot)\,\textrm{take
value in}\,U\subset\mathbb{R}^{k}\}.
$$
\vspace{1mm}An element of $\mathcal {U}$ is called an admissible
control. Here $\mathcal {U}$ is a compact subset of
$\mathbb{R}^{k}$, however this restriction is often satisfied in
practical applications.

\vspace{2mm}For a given admissible control, we consider the
following control system

\begin{equation}
\left\{\begin{array}{ll}
dX_{s}^{t,\zeta;v}=b(s,X_{s}^{t,\zeta;v},v_{s})ds+\sigma(s,X_{s}^{t,\zeta;v},v_{s})dW_{s},\quad
s\in[t,
T],\\
X_{t}^{t,\zeta;v}=\zeta,
\end{array}\right.
\end{equation}

  Where $t\geq0$ is regarded as the initial time and $\zeta\in\mathcal
{L}^{p}(\Omega,\mathcal {F}_{t},P;\mathbb{R}^{n})$ as the initial
state. The mappings
$$
b:[0,T]\times\mathbb{R}^{n}\times\mathbb{U}\longrightarrow\mathbb{R}^{n},\qquad\sigma:[0,T]\times\mathbb{R}^{n}\times\mathbb{U}\longrightarrow\mathbb{R}^{n\times
d}.
$$

satisfy the following conditions:

(H3.1) $b$ and $\sigma$ are continuous in t.

(H3.2) for some $L>0$,and all
$x,x^{'}\in\mathbb{R}^{n},v,v^{'}\in\mathbb{U},a.s.$
$$
|b(t,x,v)-b(t^{'},x^{'},v^{'})|+|\sigma(t,x,v)-\sigma(t,x^{'},v^{'})|\leq
L(|x-x^{'}|+|v-v^{'}|).
$$

  Obviously, under the above assumption, for any $v(\cdot)\in\mathcal
{U}$, control system (4) has a unique strong solution
$\{X_{s}^{t,\zeta;v}\in\mathcal {H}^{p}(0,T;\mathbb{R}^{k}),0\leq
t\leq s\leq T\}$ , and we also have the following estimates.

 \vspace{2mm} {\bf Proposition 3.1}\ \quad{\it For all
 $t\in[0,T]$, $\zeta,\,\zeta'\in L^{p}(\Omega,\mathcal
 {F}_{t},P;\mathbb{R}^{n})$, $v(\cdot),\,v^{'}(\cdot)\in\mathcal {U}$,
\begin{eqnarray}
\mathbb{E}^{\mathcal {F}^{W}_{t}}\left\{\sup_{t\leq s\leq
T}|X_{s}^{t,\zeta;v}|^{p}\right\}\leq C_{p}(1+|\zeta|^{p}),
\end{eqnarray}
\begin{eqnarray}
\mathbb{E}^{\mathcal {F}^{W}_{t}}\left\{\sup_{t\leq s\leq
T}|X_{s}^{t,\zeta;v}-X_{s}^{t,\zeta';v'}|^{p}\right\}\leq
C_{p}|\zeta-\zeta'|^{p}+ C\mathbb{E}^{\mathcal
{F}^{W}_{t}}\left\{\int^{T}_{t}|v_{s}-v^{'}_{s}|^{p}ds\right\}.
\end{eqnarray}

  Where the constant $C_{p}$ also depends on $L$.}

 \vspace{2mm} {\bf Proposition 3.2}\ \quad{\it For all
 $t\in[0,T]$, $x\in\mathbb{R}^{n}$, $v(\cdot)\in\mathcal
 {U}$, $\delta\in[0,T-t]$,

\begin{eqnarray}
\mathbb{E}\left\{\sup_{t\leq s\leq
t+\delta}|X_{s}^{t,\zeta;v}-x|^{p}\right\}\leq
C_{P}\delta^{\frac{p}{2}},
\end{eqnarray}

  where the constant $C$ also depends on $x$ and $L$}.

\vspace{1mm}Now for any given admissible control $v(\cdot)$, we
consider the following BDSDE:

\begin{equation}
\begin{array}{lcl}
Y^{t,\zeta;v}_{s}&=&\Phi(X^{t,\zeta;v}_{T})+\int^T_s
f(r,X^{t,\zeta;v}_{r},Y^{t,\zeta;v}_{r},Z^{t,\zeta;v}_{r},v_{r})dr\\
&+& \int^T_s
g(r,X^{t,\zeta;v}_{r},Y^{t,\zeta;v}_{r},Z^{t,\zeta;v}_{r})d{B_{r}}-\int^T_s
Z^{t,\zeta;v}_{r}dW_{r},  \quad t\leq s\leq T,
\end{array}
\end{equation}

where

\begin{eqnarray*}
&&\Phi:\mathbb{R}^{n}\rightarrow\mathbb{R}^{n},\hspace{44mm}\\
&&f:\Omega\times[0,T]\times\mathbb{R}^{k}\times\mathbb{R}^{k\times
d}\times \mathcal {U}\rightarrow \mathbb{R}^{n},\hspace{3mm}\\
&&g:\Omega\times[0,T]\times\mathbb{R}^{k}\times\mathbb{R}^{k\times
d}\rightarrow \mathbb{R}^{n\times l},\\
\end{eqnarray*}

  and they satisfy the following conditions:

(H3.3) $f$ and $h$ are continuous in $t$.\\

(H3.4) for some $L>0$ and $0<\alpha<1$ all $x,\,x'\in
\mathbb{R}^{n};\,y,\,y'\in\mathbb{R};\,z,\,z'\in\mathbb{R}^{d};\,v,\,v'\in
\mathcal {U}$
\begin{eqnarray*}
&&|f(t,x,y,z,v)-f(t,x',y',z',v')|+|\Phi(x)-\Phi(x')|\\
&\leq& L(|x-x'|+|y-y'|+|z-z'|+|v-v'|).
\end{eqnarray*}
\begin{eqnarray*}
\|g(t,x,y,z,v)-g(t,x',y',z',v')\|\leq
L(|x-x'|+|y-y'|)+\alpha|z-z'|).
\end{eqnarray*}

(H3.5) The function $g\in \mathbf{L}^2(R^d,\rho(x)dx).$\\

(H3.6) $\forall(y, z) \in R \times R^{d}, f(\cdot, y, z) \in \mathbf{H}^{2}, g(\cdot, y, z) \in \mathbf{H}^{2}$.\\

(H3.7) $f$ is measurable in $(t,x,y,z,v)$ and for any $r\in[t,T],$
$$E\int_0^T|f(r,0,0,0,v_r)|^2dr\leq M,$$
functions $f$ and $g$ are continuous and controlled by
$C(1+|x|+|y|+|z|+|v|)$.

  Then there exists a unique solution $(Y^{t,\zeta;v},Z^{t,\zeta;v})\in\mathcal
{S}^{p}(0,T;\mathbb{R}^{k})\times\mathcal
{H}^{p}(0,T;\mathbb{R}^{k\times d})$.
\vspace{1mm}Moreover, we get
the following estimates for the solution from Proposition 2.1 and
2.2.

\vspace{1mm} {\bf Proposition 3.3}\ \quad{\it

\begin{eqnarray}
\mathbb{E}\left\{\sup_{t\leq s\leq
T}|Y_{s}^{t,\zeta;v}|^{p}+(\int^T_t|Z_{r}^{t,\zeta;v}|^{2}dr)^{\frac{p}{2}}\right\}\leq
\mathbb{E}C_{p}(1+|\zeta|^{q}).
\end{eqnarray}}

 \vspace{1mm} {\bf Proposition 3.4}{\it

\begin{equation}
\begin{array}{lcl} &&\mathbb{E}\left\{\sup_{0\leq s\leq
T}|Y_{s}^{t,\zeta;v}-Y_{s}^{t',\zeta';v'}|^{p}+\left(\int^T_0|Z_{r}^{t,\zeta;v}-Z_{r}^{t',\zeta';v'}|^{2}dr
\right)^{\frac{p}{2}}\right\}\\
&\leq&
\mathbb{E}\left\{C_{p}(1+|\zeta|^{q}+|\zeta'|^{q})(|t-t'|^{\frac{p}{2}}+
|\zeta-\zeta'|^{p}+\int^T_0|v_{r}-v'_{r}|^{p}dr)\right\}.
\end{array}
\end{equation}

  The proof is complicated and technical, we put in the Appendix.}

  Given a control process $v(\cdot)\in\mathcal {U}$, we introduce the
associated cost functional:

\begin{eqnarray}
J(t,x;v(\cdot)):=Y_{s}^{t,x;v}|_{s=t},\qquad(t,x)\in[0,T]\times\mathbb{R}^{n},
\end{eqnarray}

  and we define the value function of the stochastic optimal control
problem

\begin{eqnarray}
u(t,x):=ess\sup_{v(\cdot)\in\mathcal
{U}}J(t,x;v(\cdot)),\qquad(t,x)\in[0,T]\times\mathbb{R}^{n},
\end{eqnarray}

  Now we continue our study of the control problem (12) and prove
that the celebrated dynamic programming principle still holds for
this optimization problem. Some proof ideas come from the proof of
the dynamic programming principle for recursive problem given by
Peng in Chinese version [6], and wu and Yu in [7].

  Now we introduce the following subspace of $\mathcal {U}$ :
\begin{equation*}
\begin{array}{ll}
\mathcal {U}^{t}:=\left\{v(\cdot)\in\mathcal
{U}\,|\,v(s)~is~\mathcal
{F}^{W}_{t,\,s}\vee\mathcal{F}^{B}_{s,\,T}~progressively~measurable,\,\forall
t\leq s\leq T\right\},\\
\overline{\mathcal
{U}}^{t}:=\left\{v_{s}=\sum^{N}_{j=1}v_{s}^{j}I_{A_{j}}|v_{s}^{j}\in\mathcal
{U}^{t},\{A_{j}\}_{j=1}^{N}~is~a~partition~of~(\Omega,\mathcal
{F}_{t}^{W})\right\}.
\end{array}
\end{equation*}
 Firstly we will prove that:

\vspace{1mm} {\bf Proposition 3.5}\ \quad{\it Under the assumptions
(H3.1)-(H3.4), the value function $u(t,x)$ defined in (12) is
$\mathcal {F}^{B}_{t,\,T}$ measurable.}
\begin{proof}
First we can prove:

$$
ess\sup_{v(\cdot)\in\mathcal
{U}}J(t,x;v(\cdot))=ess\sup_{v(\cdot)\in\overline{\mathcal
{U}}^{t}}J(t,x;v(\cdot)),
$$

$\overline{\mathcal {U}}^{t}$ is the subset of $\mathcal {U}$, then

$$
ess\sup_{v(\cdot)\in\mathcal {U}}J(t,x;v(\cdot))\geq
ess\sup_{v(\cdot)\in\overline{\mathcal {U}}^{t}}J(t,x;v(\cdot)).
$$

  We need to consider the inverse inequality. For any
$v(\cdot),\,\widetilde{v}(\cdot)\in\mathcal {U}$, for the
Proposition 3.4, we know

$$
\mathbb{E}\left\{|Y^{t,x;v}_{t}-Y^{t,x;\widetilde{v}}_{t}|^{p}\right\}\leq
C\mathbb{E}\int^T_t |v_{r}-\widetilde{v}_{r}|^{p}ds.
$$

  Note that $\overline{\mathcal {U}}^{t}$ is dense in $\mathcal
{U}$, then for each $v(\cdot)\in\mathcal {U}$, there exists a
sequence $\{v_{n}(\cdot)\}_{n=1}^{\infty}\in\overline{\mathcal
{U}}^{t}$ such that

$$
\lim_{n\rightarrow\infty}\mathbb{E}\left\{|Y^{t,x;v_{n}}_{t}-Y^{t,x;v}_{t}|^{p}\right\}=0.
$$

  So there exists a subsequence, we denote without loss of
generality $\{v_{n}(\cdot)\}_{n=1}^{\infty}$ such that

$$
\lim_{n\rightarrow\infty}Y^{t,x;v_{n}}_{t}=Y^{t,x;v}_{t},\qquad a.s.,
$$

so that

$$
\lim_{n\rightarrow\infty}J(t,x,v_{n}(\cdot))=J(t,x,v(\cdot)),\qquad
a.s..
$$

  By the arbitrariness of $v(\cdot)$ and the definition of essential
supremum, we get

$$
ess\sup_{v(\cdot)\in\overline{\mathcal {U}}^{t}}J(t,x;v(\cdot))\geq
ess\sup_{v(\cdot)\in\mathcal {U}}J(t,x;v(\cdot)),
$$

  then we obtain (3.11).

   Second, we want to prove

$$
ess\sup_{v(\cdot)\in\overline{\mathcal {U}}^{t}}J(t,x;v(\cdot))=
ess\sup_{v(\cdot)\in\mathcal {U}^{t}}J(t,x;v(\cdot)).
$$

  Obviously,

$$
ess\sup_{v(\cdot)\in\overline{\mathcal {U}}^{t}}J(t,x;v(\cdot))\geq
ess\sup_{v(\cdot)\in\mathcal {U}^{t}}J(t,x;v(\cdot)).
$$
\end{proof}
  In order to get the inverse inequality, we need the following
  Lemma:

 {\bf Lemma 3.6}
\begin{equation*}
\begin{array}{ll}
X^{t,x;\sum^{N}_{j=1}v^{j}I_{A_{j}}}=\sum^{N}_{j=1}I_{A_{j}}X^{t,x;v^{j}},\quad Y^{t,x;\sum^{N}_{j=1}v^{j}I_{A_{j}}}=\sum^{N}_{j=1}I_{A_{j}}Y^{t,x;v^{j}}\\
Z^{t,x;\sum^{N}_{j=1}v^{j}I_{A_{j}}}=\sum^{N}_{j=1}I_{A_{j}}Z^{t,x;v^{j}}
\end{array}
\end{equation*}
$\forall v(\cdot)\in\overline{\mathcal {U}}^{t}$, we have

$$
J(t,x;v(\cdot))=J(t,x;\sum^{N}_{j=1}v^{j}(\cdot)I_{A_{j}})=\sum^{N}_{j=1}I_{A_{j}}J(t,x;v^{j}(\cdot)),
$$

  because

\begin{eqnarray*}
&&ess\sup_{v(\cdot)\in\overline{\mathcal {U}}^{t}}J(t,x;v(\cdot))=
ess\sup_{v(\cdot)\in\overline{\mathcal
{U}}^{t}}\sum^{N}_{j=1}I_{A_{j}}J(t,x;v^{j}(\cdot))\\
&\leq&\sum^{N}_{j=1}ess\sup_{v(\cdot)\in\overline{\mathcal
{U}}^{t}}J(t,x;v^{j}(\cdot))=\sum^{N}_{j=1}ess\sup_{v(\cdot)\in\overline{\mathcal
{U}}^{t}}J(t,x;v(\cdot)=ess\sup_{v(\cdot)\in\mathcal
{U}^{t}}J(t,x;v(\cdot)),
\end{eqnarray*}
then we can get
$$
ess\sup_{v(\cdot)\in\overline{\mathcal {U}}^{t}}J(t,x;v(\cdot))\leq
ess\sup_{v(\cdot)\in\mathcal {U}^{t}}J(t,x;v(\cdot)).
$$
However, when $v(\cdot)\in\mathcal {U}^{t}$, the cost functional
 $J(t,x;v(\cdot))$ is $\mathcal {F}^{B}_{t,\,T}$ measurable.

  So

$$
u(t,x)=ess\sup_{v(\cdot)\in\mathcal {U}^{t}}J(t,x;v(\cdot))
$$

is $\mathcal {F}^{B}_{t,T}$ measurable.

Next we will discuss the continuity of value function $u(t,x)$ with
 respect to $x$ and $t$. We have the following estimates:

  {\bf Lemma 3.7}\quad {\it For each $t\in[0,T]$, $x$ and $x'\in\mathbb{R}^{n}$, we
have

\begin{equation}
\begin{array}{l l}
(i)\ \mathbb{E}|u(t,x)-u(t',x')|^{p}\leq
C_{p}(1+|x|^{q}+|x'|^{q})(|x-x'|^{p}+|t-t'|^{\frac{p}{2}});\\
(ii)\ \mathbb{E}|u(t,x)|^{p}\leq C_{p}(1+|x|^{q}).
\end{array}
\end{equation}
}

\begin{proof} Using the estimates: $\mathbb{E}(\sup_{t\leq s\leq
T}|Y_{s}^{t,x;v}|^{p})\leq C_{p}(1+|X|^{q})$, for each admissible
control $v(\cdot)\in\mathcal {U}$, we have

$$
\mathbb{E}|J(t,x;v(\cdot))|^{p}\leq C_{p}(1+|x|^{q})
$$
and
$$
\mathbb{E}|J(t,x;v(\cdot))-J(t',x;v'(\cdot))|^{p}\leq
C_{p}(1+|x|^{q}+|x'|^{q})(|t-t'|^{\frac{p}{2}}+|x-x'|^{p}).
$$

  On the otherhand, for each $\varepsilon>0,\,\exists
v(\cdot),\,v'(\cdot)\in\mathcal {U}$ such that:

\begin{eqnarray*}
J(t,x;v'(\cdot))\leq u(t,x)\leq J(t,x,v(\cdot))+\varepsilon,\hspace{3mm}\\
J(t',x';v(\cdot))\leq u(t',x')\leq J(t',x',v'(\cdot))+\varepsilon.
\end{eqnarray*}

\vspace{1mm}Form the estimate (10) we can get:

\begin{eqnarray*}
-C_{p}(1+|x|^{q})-\varepsilon\leq\mathbb{E}|J(t,x;v'(\cdot))|^{p}\leq
\mathbb{E}|u(t,x)|^{p}\leq
\mathbb{E}|J(t,x;v(\cdot))|^{p}+\varepsilon\leq
C_{p}(1+|x|^{q})+\varepsilon.
\end{eqnarray*}

  Form the arbitrariness of $\varepsilon$, we can obtain
(ii).

Similarly,
\begin{eqnarray*}
J(t,x;v'(\cdot))-J(t',x';v'(\cdot))-\varepsilon\leq
u(t,x)-u(t',x')\leq J(t,x;v(\cdot))-J(t',x';v(\cdot))+\varepsilon.
\end{eqnarray*}
\begin{eqnarray*}
|u(t,x)-u(t,x)\leq
\max\{|J(t,x;v(\cdot))-J(t',x';v'(\cdot))|,|J(t,x;v(\cdot))-J(t',x';v(\cdot))|\}+\varepsilon.
\end{eqnarray*}
\begin{eqnarray*}
&~~&\mathbb{E}|u(t,x)-u(t,x)|^{p}\\
&\leq&
C\max\{\mathbb{E}|J(t,x;v(\cdot))-J(t',x';v'(\cdot))|^{p},\mathbb{E}|J(t,x;v(\cdot))-J(t',x';v(\cdot))|^{p}\}+C\varepsilon^{p}\\
&\leq&
C_{p}(1+|x|^{q}+|x'|^{q})(|x-x'|^{p}+|t-t'|^{\frac{p}{2}})+C\varepsilon^{p}.
\end{eqnarray*}

  Then we can obtain (i).
\end{proof}
\vspace{2mm}For the value function of our recursive optimal control
problem.We have:

{\bf Lemma 3.8}\quad{\it$\forall t\in[0,T],\,\forall
v(\cdot)\in\mathcal {U}$, for all $\zeta\in L^{p}(\Omega,\mathcal
{F}_{t},P;)$, we have

$$
J(t,\zeta;v(\cdot))=Y^{t,\,\zeta;\,v(\cdot)}.
$$}

\vspace{1mm} \begin{proof}We first study a simple case: $\zeta$
is the following form: $\zeta=\sum^{N}_{i=1}I_{A_{i}}x_{i}$, where
$\{A_{i}\}^{N}_{i=1}$ is a finite partition of $(\Omega,\mathcal
{F}^{W}_{t})$, and $x_{i}\in\mathbb{R}^{n}$ for $1\leq i\leq N$, so

\begin{eqnarray*}
Y^{t,\,\zeta;\,v}_{s}=Y^{t,\sum^{N}_{i=1}I_{A_{i}}xi;v}_{s}=\sum^{N}_{i=1}I_{A_{i}}Y^{t,\,xi;\,v}_{s}.
\end{eqnarray*}

  From the definition of cost functional. We deduce that

\begin{eqnarray*}
Y^{t,\zeta;v}_{t}=\sum^{N}_{i=1}I_{A_{i}}Y^{t,xi;v}_{t}=\sum^{N}_{i=1}I_{A_{i}}J(t,x_{i};v(\cdot))
=J(t,\sum^{N}_{i=1}I_{A_{i}}x_{i};v(\cdot))=J(t,\zeta;v(\cdot)).
\end{eqnarray*}
  Therefor, for simple functions, we get the desired result.

\vspace{2mm}Given a general $\zeta\in L^{p}(\Omega,\mathcal
{F}_{t},P;\mathbb{R}^{n})$, we can choose a sequence of simple
function $\{\zeta_{i}\}$ which converges to $\zeta$ in
$L^{p}(\Omega,\mathcal {F}_{t},P;\mathbb{R}^{n})$. Consequently, we
have:
\begin{eqnarray*}
&&\mathbb{E}\{|Y^{t,\zeta;v}_{t}-Y^{t,\zeta_{i};v}_{t}|^{p}\}\\
&\leq&\mathbb{E}\{C_{p}(1+|\zeta|^{q}+|\zeta_{i}|^{q})(|\zeta-\zeta_{i}|^{p})\}\rightarrow
0,\quad as\,i\rightarrow \infty.
\end{eqnarray*}
So
\begin{eqnarray*}
&&\mathbb{E}\{|J(t,\zeta;v(\cdot))-J(t,\zeta_{i};v(\cdot))|^{p}\}\\
&\leq&\mathbb{E}\{C_{p}(1+|\zeta|^{q}+|\zeta_{i}|^{q})(|\zeta-\zeta_{i}|^{p})\}\rightarrow
0,\quad as\,i\rightarrow\infty.
\end{eqnarray*}
\vspace{2mm}With the help of $Y^{t,\zeta;v}_{t}=J(t,\zeta
;v(\cdot))$, the proof is completed.
\end{proof}
   For the value function of our recursive optimal
control problem, we have

 \vspace{2mm} {\bf Lemma 3.9:}\quad {\it Fixed $t\in[0,T)$ and $\zeta\in L^{p}(\Omega,\mathcal
{F}_{t},P;\mathbb{R}^{n})$, for each $v(\cdot)\in\mathcal {U}$, we
have :

$$
u(t,\zeta)\geq Y^{t,\,\zeta;\,v(\cdot)}_{t}.
$$

On the other hand, for each $\varepsilon>0$, there exists an
admissible control $v(\cdot)\in\mathcal {U}$ such that:

$$
u(t,\zeta)\leq Y^{t,\,\zeta,;\,v(\cdot)}_{t}+\varepsilon,\quad a.s..
$$}

\vspace{2mm}Now we start to discuss the (generalized) dynamic
programming principle for our recursive optimal control problem.

   Firstly we introduce a family of (backward) semigroups which is
original from Peng's idea in [6].

   Given the initial condition $(t,x)$, an admissible control
$v(\cdot)\in\mathcal {U}$, a positive number $\delta\leq T-t$ and a
real-value random variable $\eta\in L^{p}(\Omega,\mathcal
{F}_{t+\delta},P;\mathbb{R})$, we denote

$$
G^{t,x;v}_{t,t+\delta}[\eta]:=Y_{t}\;,
$$

  where $(Y_{s},Z_{s})$ is the solution of the following double BSDE
with the horizon $t+\delta$:

$$
Y_{s}=\xi+\int^{t+\delta}_s
f(r,Y_{r},Z_{r})dr+\int^{t+\delta}_sg(r,Y_{r},Z_{r})d{B_{r}}
-\int^{t+\delta}_sZ_{r} d W_{r},\ t\leq s\leq t+\delta.
$$

  Obviously,

$$
G^{t,x;v}_{t,T}[\Phi(X^{t,x;v}_{T})]=G^{t,x;v}_{t,t+\delta}[Y^{t,x;v}_{t+\delta}].
$$

   Then our (generalized) dynamic programming principle holds.

 \vspace{2mm}{\bf Theorem 3.11} \quad {\it Under the assumption
(H3.1)-(H3.4), the value function $u(t,x)$ obeys the following
dynamic programming principle: for each $0<\delta\leq T-t$,

$$
u(t,x)=ess\sup_{v(\cdot)\in\mathcal
{U}}G^{t,x;v}_{t,t+\delta}[u(t+\delta,X^{t,x;v}_{t+\delta})].
$$}

\begin{proof}We have

\begin{eqnarray*}
u(t,x)=ess\sup_{v(\cdot)\in\mathcal
{U}}G^{t,x;v}_{t,T}[\Phi(X^{t,x;v}_{T})]=ess\sup_{v(\cdot)\in\mathcal
{U}}G^{t,x;v}_{t,t+\delta}[Y^{t,x;v}_{t+\delta}]=ess\sup_{v(\cdot)\in\mathcal
{U}}G^{t,x;v}_{t,t+\delta}[Y^{t+\delta,X^{t,x;v}_{t+\delta};v}_{t+\delta}].
\end{eqnarray*}

  Form Lemma 3.10 and the comparison theorem of double BDSDE

\begin{eqnarray*}
u(t,x)\leq ess\sup_{v(\cdot)\in\mathcal
{U}}G^{t,x;v}_{t,t+\delta}[u(t+\delta,X^{t,x;v}_{t+\delta})].
\end{eqnarray*}

   On the other hand, from Lemma 3.10, for every $\varepsilon>0$, we can
find an admissible control $\overline{v}(\cdot)\in\mathcal {U}$ such
that

$$
u(t+\delta,X^{t,x;v}_{t+\delta})\leq
Y^{t+\delta,X^{t,x;v}_{t+\delta};\overline{v}}_{t+\delta}+\varepsilon.
$$

  For each $v(\cdot)\in\mathcal {U}$, we denote
$\widetilde{v}(s)=I_{\{s\leq
t+\delta\}}v(s)+I_{\{s>t+\delta\}}\overline{v}(s)$. From the above
inequality and the comparison theorem, we get

$$
Y^{t+\delta,X^{t,x;\widetilde{v}}_{t+\delta};\widetilde{v}}_{t+\delta}\geq
u(t+\delta,X^{t,x;\widetilde{v}}_{t+\delta})-\varepsilon,\quad
u(t,x)\geq ess\sup_{\widetilde{v}(\cdot)\in\mathcal
{U}}G^{t,x;\widetilde{v}}_{t,t+\delta}[u(t+\delta,X^{t,x;\widetilde{v}}_{t+\delta})-\varepsilon].
$$

  By Proposition 2.2 , there exists a positive constant $C_{0}$ such that

\begin{eqnarray*}
u(t,x)\geq ess\sup_{\widetilde{v}\in\mathcal
{U}}G^{t,x;\widetilde{v}}_{t,t+\delta}[u(t+\delta,X^{t,x;\widetilde{v}}_{t+\delta})]-C_{0}\varepsilon.
\end{eqnarray*}

  Therefore, letting $\varepsilon\downarrow0$, we obtain

$$
u(t,x)\geq ess\sup_{\widetilde{v}\in\mathcal
{U}}G^{t,x;\widetilde{v}}_{t,t+\delta}[u(t+\delta,X^{t,x;\widetilde
{v}}_{t+\delta})].
$$

  Because $\widetilde{v}(\cdot)$ acts only on $[t,t+\delta]$ for
$G^{t,x;\widetilde{v}}_{t,t+\delta}$, from the definition of
$\widetilde{v}(\cdot)$ and the arbitrariness of
$\widetilde{v}(\cdot)$, we know that the above inequality can be
written as

$$
u(t,x)\geq ess\sup_{v\in\mathcal
{U}}G^{t,x;v}_{t,t+\delta}[u(t+\delta,X^{t,x;v}_{t+\delta})],
$$
which is our desired conclusion.
\end{proof}
\section{Sobolev weak solutions for the HJB equations corresponding to the stochastic recursive control problem }

In this section we consider the Sobolev weak solution for the SHJB
equation related to the stochastic recursive optimal control problem.

We give some preliminary results of the BDSDE which are useful for the
sobolev weak solutions for the recursive optimal control problem. In order to facilitate understanding and narration, we divided it into several parts.\\
\textbf{\emph{ Part }}\uppercase\expandafter{\romannumeral1}\\
Consider the control system defined by (4)
\begin{eqnarray}\label{eq39}
\left\{\begin{array}{ll}
dx_{s}^{t,x;v}=b(s,x_{s}^{t,x;v},v_s)ds+\sigma(s,x_{s}^{t,x;v},v_s)dW_{s},\quad
s\in[t,
T],\\
x_{t}^{t,x;v}=x.
\end{array}\right.
\end{eqnarray}
satisfying the following conditions:\\
(H4.1) The coefficient $b$ is $2$ times continuously differentiable
in $x$ and all their partial derivatives are uniformly bounded,
$\sigma$ is $3$ times continuously differentiable in $x$ and all
their partial derivatives are uniformly bounded, and
$|b(t,x,v)|+|\sigma(t,x,v)|\leq K(1+|x|)$, where $K$ is a constant.\\
And the cost function defined by the following BSDE:
\begin{eqnarray*}\label{eq2.74}
Y^{t,\zeta;v}_{s}&=&h(x^{t,\zeta;v}_{T})+\int^T_s
f(r,x^{t,\zeta;v}_{r},Y^{t,\zeta;v}_{r},Z^{t,\zeta;v}_{r},v_r)dr+\int^T_s
g(r,x^{t,\zeta;v}_{r},Y^{t,\zeta;v}_{r},Z^{t,\zeta;v}_{r})dBr\\
&-&\int^T_s Z^{t,\zeta;v}_{r}dW_{r},
\end{eqnarray*}
where
\begin{eqnarray*}
&&h:R^n\rightarrow R,\hspace{44mm}\\
&&f:[0,T]\times R^n\times R\times R^d\times U\rightarrow R,\hspace{3mm}\\
&&g:[0,T]\times R^n\times R\times R^d\rightarrow R,
\end{eqnarray*}
satisfying the conditions as same as that denoted in Chapter 3.

Obviously, under the above assumptions(H3.4)(H3.5)(H3.7)and(H4,1), for a
given  control $v(\cdot)\in\mathcal {U}$, there exists a unique
solution $(Y^{t,\zeta;v},Z^{t,\zeta;v})\in S^{2}(0,T;R)\times
H^{2}(0,T;R^d)$. We introduce the associated cost functional:

\begin{eqnarray}\label{eq2.75}
J(t,x;v):=Y_{s}^{t,x;v}|_{s=t},\qquad(t,x)\in[0,T]\times R^n,
\end{eqnarray}
and  define the value function of the stochastic optimal control problem
\begin{eqnarray}\label{eq2.76}
u(t,x):=ess\sup_{v\in\mathcal {U}}J(t,x;v),\qquad(t,x)\in[0,T]\times
R^n,
\end{eqnarray}

According to the conclusion in previous chapter, we know that the celebrated dynamic programming principle still holds for this recursive stochastic optimal control problem. We
therefore deduce the following HJB equation:

\begin{eqnarray}\label{eq2.77}
\left\{\begin{array}{ll} \frac{\partial u}{\partial
t}(t,x)+\sup_{v\in\mathcal {U}}\{\mathcal
{L}(t,x,v)u(t,x)+f(t,x,u(t,x),\sigma\nabla u(t,x),v)+g(t,x,u(t,x),\sigma\nabla u(t,x))dBt\}=0,\\
u(T,x)=h(x),
\end{array}\right.
\end{eqnarray}
where $\mathcal {L}$ is a family of second order linear partial
differential operators,
\begin{eqnarray}\mathcal {L}(t,x,q)\varphi&=&\frac{1}{2}tr[\sigma(t,x,v_t)\sigma(t,x,v_t)^TD^2\varphi]\nonumber\\
&&+\langle b(t,x,v_t),D\varphi\rangle.\nonumber
\end{eqnarray}
\textbf{\emph{ Part }}\uppercase\expandafter{\romannumeral2}\\

We define the weight function $\rho$ is continuous positive on $R^d$ satisfying $\int_{R^d}\rho(x)dx=1$ and $\int_{R^d}|x|^2\rho(x)dx<\infty$.\\

Denote by $L^2(R^d,\rho(x)dx)$ the weighted $L^2$-space with weight
function endowed with the norm
$$\|u\|_{L^2(R^d,\rho(x)dx)}=[\int_{R^d}|u(x)|^2\rho(x)dx]^{\frac{1}{2}}.$$

 We set $D:=\{u:R^d\rightarrow R\ such\ that\ u\in
L^2(R^d,\rho (x)dx)\ and\ \frac{\partial u}{\partial x_i}\in
L^2(R^d,\rho (x)dx)\}$, where $\frac{\partial u}{\partial x_i}$
is derivative with respect to $x$ in the weak sense. Note that $D$ equipped with the norm
$$\|u\|_D=[\int_{R^d}|u(x)|^2\rho(x)dx+\sum_{1\leq i\leq d}\int_{R^d}\left|\frac{\partial u}{\partial x_i}\right|^2\rho(x)dx]^{\frac{1}{2}}$$
is a Hilbert space, which is a classical
Dirichlet space. Moreover, $D$ is a subset of the Sobolev space
$H_1(R^d)$.

 We set $H:=\{u: u\in L^2(R^d,\rho(x)dx)\ and \
(\sigma^\ast\bigtriangledown u)\in L^2(R^d, \rho(x)dx)\}$ equipped
with the norm
$$\|u\|_H=[\int_{R^d}|u(x)|^2\rho(x)dx+\int_{R^d}\left|(\sigma^\ast\nabla
u(x))\right|^2\rho(x)dx]^{\frac{1}{2}}.$$

 We say $u\in L^2([0,T],H)$ if
$\int^T_0\|u(t)\|^2_Hdt<\infty$.

Let $T$ be a strictly positive real number and $U$ a nonempty
compact set of $R^k$.\\
\textbf{\emph{ Part }}\uppercase\expandafter{\romannumeral3}

Then, we introduce some equivalence norm.

The solution of SDE generates a stochastic flow, and the inverse flow is
denoted by $\widehat{x}^{t,x,v}_s$. It is known from \cite{IW1989}
that $x\rightarrow\widehat{x}^{t,x,v}_s$ is differentiable and we
denote by $J(\widehat{x}^{t,x,v}_s)$ the determinant of the Jacobian
matrix of $\widehat{x}^{t,x,v}_s$, which is positive and
$J(\widehat{x}^{t,x,v}_t)=1$. For $\varphi \in C^\infty_c(R^d)$ we
define a process $\varphi_t: \Omega\times [0,T]\times R^d\rightarrow
R$ by
$\varphi_t(s,x)=\varphi(\widehat{x}^{t,x,v}_s)J(\widehat{x}^{t,x,v}_s)$.
Following Kunita \cite{K1994}, we can define the composition of
$u\in L^2(R^d)$ with the stochastic flow by $(u\circ
x^{t,\cdot,v}_s,\varphi)=(u,\varphi_t(s,\cdot))$. Indeed, by a
change of variable, we have
$$(u\circ x^{t,\cdot,v}_s,\varphi)=\int_{R^d}u(y)\varphi(\widehat{x}^{t,x,v}_s)J(\widehat{x}^{t,x,v}_s)dy=
\int_{R^d}u(x^{t,x,v}_s)\varphi(x)dx.$$

In \cite{s1}, V. Bally and A. Matoussi proved that
$\varphi_t(s,x)$ is a semimartingale and admits the following lemma 4.1 and lemma 4.2.

\begin{lemma}\label{lemma6} For $\varphi\in C^2_c(R^d)$, we have
$$\varphi_t(s,x)=\varphi(x)-\sum^d_{j=1}\int^s_t\sum^d_{i=1}\frac{\partial}{\partial
x_i}(\sigma_{i,j}(r,x)\varphi_t(r,x))dW^j_r+\int^s_tL^\ast_r\varphi_t(r,x)dr,$$
\end{lemma}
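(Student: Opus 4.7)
The strategy I would take is the standard duality/test-function approach of Kunita, using the fact that $\varphi \mapsto \varphi_t(s,\cdot)$ is, by definition, the adjoint of the composition operator $u \mapsto u\circ x^{t,\cdot,v}_s$. The plan is to derive the identity weakly — that is, first establish it after pairing with an arbitrary smooth test function $u \in C^\infty_c(R^d)$ — and then recover the pointwise (distributional) identity from the arbitrariness of $u$.

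First I would fix $u \in C^\infty_c(R^d)$ and apply the classical It\^o formula to $u(x^{t,x,v}_s)$ for each $x$, obtaining
\[
u(x^{t,x,v}_s) = u(x) + \int_t^s L_r u(x^{t,x,v}_r)\,dr + \sum_{j=1}^d \int_t^s \sum_{i=1}^d \sigma_{i,j}(r,x^{t,x,v}_r)\,\partial_i u(x^{t,x,v}_r)\,dW^j_r,
\]
where $L_r$ is the generator of $x^{t,\cdot,v}$ (with the control $v$ frozen). I would then integrate both sides against $\varphi(x)\,dx$ and invoke the stochastic Fubini theorem to interchange $\int_{R^d}$ with $\int_t^s$ and with the $dW^j_r$ integrals; this is permitted because $u,\varphi$ are compactly supported and the coefficients have at most linear growth, so all the relevant integrands lie in $L^2$ uniformly.

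Next I would translate the resulting pairings back through the duality $(w\circ x^{t,\cdot,v}_r, \varphi) = (w, \varphi_t(r,\cdot))$ quoted from \cite{s1}. This converts $\int u(x^{t,x,v}_s)\varphi(x)\,dx$ into $(u,\varphi_t(s,\cdot))$, the drift term into $\int_t^s (L_r u,\,\varphi_t(r,\cdot))\,dr$, and the $j$-th martingale term into $\int_t^s \sum_i (\sigma_{i,j}(r,\cdot)\partial_i u,\,\varphi_t(r,\cdot))\,dW^j_r$. An integration by parts in $x$ — valid because $u \in C^\infty_c$ while $\varphi_t(r,\cdot)$ is only a distribution — moves the spatial derivatives off $u$, producing $(u, L_r^\ast \varphi_t(r,\cdot))$ in the drift and $-(u,\partial_i(\sigma_{i,j}(r,\cdot)\varphi_t(r,\cdot)))$ in the martingale. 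Collecting terms yields
\[
(u,\varphi_t(s,\cdot)) = (u,\varphi) + \int_t^s (u,L_r^\ast \varphi_t(r,\cdot))\,dr - \sum_{j=1}^d \int_t^s \sum_{i=1}^d \Bigl(u,\,\partial_i(\sigma_{i,j}(r,\cdot)\varphi_t(r,\cdot))\Bigr)\,dW^j_r.
\]
Since $u\in C^\infty_c(R^d)$ is arbitrary and $C^\infty_c$ is dense in $L^2(R^d,\rho(x)\,dx)$, the identity holds in the sense of distributions and the stated formula follows.

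The main obstacle — and the only step that is more than bookkeeping — is the stochastic Fubini interchange and the simultaneous integration by parts in the martingale term: one must verify that $x \mapsto \sigma_{i,j}(r,x)\varphi_t(r,x)$ has a well-defined weak derivative in $L^2(R^d,\rho(x)\,dx)$ for almost every $(\omega,r)$, so that $\partial_i(\sigma_{i,j}\varphi_t)$ makes sense under the stochastic integral. This is handled through the differentiability of the inverse flow $x \mapsto \widehat{x}^{t,x,v}_s$ (Kunita), together with (H4.1), which guarantees that $\varphi_t(r,\cdot)$ inherits a weak $x$-derivative with moments controlled uniformly in $r$; the hypothesis $\varphi \in C^2_c(R^d)$ supplies the extra regularity needed so that the $L_r^\ast \varphi_t$ term is a genuine element of the dual of $C^\infty_c$ pairing and the stochastic integrals against $dW^j$ are martingales in $L^2$.
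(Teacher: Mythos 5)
The paper gives no proof of this lemma: it is quoted directly from Bally and Matoussi \cite{s1} (the text explicitly says the semimartingale decomposition of $\varphi_t(s,x)$ was ``proved in \cite{s1}''), so there is no internal argument to compare against. Your duality argument --- It\^o's formula applied to $u(x^{t,x,v}_s)$ for $u\in C^\infty_c$, pairing with $\varphi(x)\,dx$, stochastic Fubini, transferring through $(u\circ x^{t,\cdot,v}_r,\varphi)=(u,\varphi_t(r,\cdot))$, integrating by parts, and concluding by arbitrariness of $u$ --- is exactly the standard proof of this adjoint (Fokker--Planck type) equation in the cited reference, and it is correct.

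Two small points of precision. First, under (H4.1) the flow and its inverse are $C^2$-diffeomorphisms, so $\varphi_t(r,\cdot)=\varphi(\widehat{x}^{t,\cdot,v}_r)J(\widehat{x}^{t,\cdot,v}_r)$ is a genuine $C^1$ function with (random) compact support, not ``only a distribution'' as you describe it; this strengthens rather than weakens your argument, since the weak derivative $\partial_i(\sigma_{i,j}\varphi_t)$ you worry about in your final paragraph exists classically, and the $L^2$ bounds needed for the stochastic Fubini interchange follow from the moment estimates on the Jacobian of the inverse flow. Second, to pass from the weak identity $(u,\varphi_t(s,\cdot))=(u,\varphi)+\cdots$ holding a.s.\ for each fixed $u$ to the a.e.-in-$x$ identity of the lemma, you should run the argument over a countable dense family of test functions so that the exceptional null set in $\Omega$ does not depend on $u$; this is routine but deserves a sentence.
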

where $L^\ast_t$ is the adjoint operator of $L_t$.

The next lemma, known as the norm equivalence result and proved in
\cite{s1} plays an important role in the proof of the main
result.

\begin{lemma}\label{lemma7} Assume that
(H1) holds. Then for any $v\in \cal{U}$ there exist two
constants $c>0$ and $C>0$ such that for every $t\leq s\leq T$ and
$\varphi\in L^1(R^d;\rho(x)dx)$
\begin{eqnarray}
c\int_{R^d}|\varphi(x)|\rho(x)dx\leq\int_{R^d}E(|\varphi(x^{t,x;v}_s)|)\rho(x)dx\leq
C\int_{R^d}|\varphi(x)|\rho(x)dx.\nonumber
\end{eqnarray}
Moreover, for every $\psi\in L^1([0,T]\times R^d;dt\otimes
\rho(x)dx)$,
\begin{eqnarray}
c\int_{R^d}\int^T_t|\psi(s,x)|ds\rho(x)dx\leq\int_{R^d}\int^T_tE(|\psi(s,x^{t,x;v}_s)|)ds\rho(x)dx\leq
C\int_{R^d}\int^T_t|\psi(s,x)|ds\rho(x)dx.\nonumber
\end{eqnarray}
\end{lemma}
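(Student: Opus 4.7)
The plan is to reduce the inequalities to sharp two-sided estimates on the first moment of $J(\widehat x^{t,\cdot;v}_s)\rho(\widehat x^{t,\cdot;v}_s)$, which are classical once the stochastic flow $x\mapsto x^{t,x;v}_s$ is known to be a $C^1$-diffeomorphism with integrable Jacobian of all orders. Under assumption (H4.1) the coefficients $b(\cdot,\cdot,v)$ and $\sigma(\cdot,\cdot,v)$ are uniformly Lipschitz and smooth in $x$, so by Kunita's theorem the flow $x^{t,\cdot;v}_s$ and its inverse $\widehat x^{t,\cdot;v}_s$ are diffeomorphisms and the Jacobian determinants $J(x^{t,\cdot;v}_s)$, $J(\widehat x^{t,\cdot;v}_s)$ belong to $L^p(\Omega)$ for every $p\geq 1$, uniformly in $s\in[t,T]$ and locally uniformly in $x$. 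This is the analytic input I will use throughout.

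First I would prove the upper bound. For fixed $s$ and a non-negative $\varphi\in L^1(R^d,\rho(x)dx)$, use Fubini and the change of variables $y=x^{t,x;v}_s(\omega)$ for each $\omega$:
\begin{eqnarray*}
\int_{R^d}E[\varphi(x^{t,x;v}_s)]\rho(x)dx
=E\Bigl[\int_{R^d}\varphi(y)\,\rho(\widehat x^{t,y;v}_s)\,J(\widehat x^{t,y;v}_s)\,dy\Bigr]
=\int_{R^d}\varphi(y)\,E\bigl[\rho(\widehat x^{t,y;v}_s)J(\widehat x^{t,y;v}_s)\bigr]\,dy.
\end{eqnarray*}
Thus it suffices to produce constants $c,C>0$, independent of $s\in[t,T]$, such that
$$c\,\rho(y)\leq E\bigl[\rho(\widehat x^{t,y;v}_s)\,J(\widehat x^{t,y;v}_s)\bigr]\leq C\,\rho(y)\qquad\text{for a.e. }y\in R^d.$$
The upper bound follows from Cauchy--Schwarz, $E[J(\widehat x^{t,y;v}_s)^2]^{1/2}\leq K_1$ uniformly, together with the estimate $E[\rho(\widehat x^{t,y;v}_s)^2]^{1/2}\leq K_2\,\rho(y)$, which in turn comes from the standard flow moment bounds $E[|\widehat x^{t,y;v}_s-y|^p]\leq K_p(1+|y|)^p$ combined with the fact that the admissible weights $\rho$ used in this framework satisfy a two-sided multiplicative control of the form $\rho(y')\leq K\rho(y)(1+|y'-y|)^m$ for some $m\geq 0$ (this is precisely why $\rho$ is taken continuous, strictly positive and with $\int |x|^2\rho(x)dx<\infty$ in Part II).

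For the lower bound, apply the same change of variables to $\psi(y):=\varphi(y)\rho(\widehat x^{t,y;v}_s)J(\widehat x^{t,y;v}_s)$ but in the opposite direction, using that $\widehat x^{t,x;v}_s\circ x^{t,x;v}_s=\mathrm{id}$ and $J(x^{t,\cdot;v}_s)J(\widehat x^{t,\cdot;v}_s)=1$. Equivalently, apply Jensen's inequality downstairs:
$$E\bigl[\rho(\widehat x^{t,y;v}_s)J(\widehat x^{t,y;v}_s)\bigr]\,E\bigl[\rho(\widehat x^{t,y;v}_s)^{-1}J(\widehat x^{t,y;v}_s)^{-1}\bigr]\geq 1,$$
so the lower bound reduces to an upper bound on the reciprocal, obtained by the same flow moment estimates applied to $\rho^{-1}$ and $J^{-1}$; the uniform ellipticity-free argument works because $(\sigma,b)$ are Lipschitz and $\rho$ is strictly positive on compacts with controlled decay. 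Combining both bounds gives the first inequality of the lemma. The second, time-integrated inequality then follows by integrating the first in $s\in[t,T]$ and applying Fubini on the left-hand side.

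The main obstacle will be the uniform control of the negative moments $E[J(\widehat x^{t,y;v}_s)^{-p}]$ and of the ratio $\rho(\widehat x^{t,y;v}_s)/\rho(y)$ from above and below independently of $(s,y)$; this is where the non-degeneracy-free argument of Bally--Matoussi in \cite{s1} is essential and where the smoothness hypotheses in (H4.1) on $\sigma$ (namely $C^3_b$) are used, since one must differentiate the SDE for the flow twice to get a tractable SDE for $J(\widehat x^{t,\cdot;v}_s)$ whose moments one can control by Gronwall. Once these moment bounds are in place, everything else is an application of Fubini and change of variables.
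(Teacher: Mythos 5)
Your argument is essentially the proof the paper points to: the paper omits the proof entirely and defers to Proposition 5.1 of Bally--Matoussi \cite{s1}, and that proposition is proved exactly by your route --- change of variables via the inverse flow, reduction to the two-sided bound $c\,\rho(y)\leq E[\rho(\widehat x^{t,y;v}_s)J(\widehat x^{t,y;v}_s)]\leq C\,\rho(y)$, and moment estimates on the Jacobian of the flow and its inverse. The only caveat, which you already flag yourself, is that the multiplicative control $\rho(y')\leq K\rho(y)(1+|y'-y|)^m$ is not literally among the paper's stated hypotheses on $\rho$ (continuity, positivity, $\int\rho=1$, $\int|x|^2\rho<\infty$) and must be added, as it is in \cite{s1}, for the Cauchy--Schwarz and Jensen steps to close.
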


The constants $c$ and $C$ depend on $T$, on $\rho$ and on the bounds
of derivatives of the $b$ and $\sigma$. The proof is similar to the
proof of Proposition 5.1 in \cite{s1}, hence we omit
it.\vspace{2mm}

Now we give the definition of a sobolev solution for SHJB equation(\ref{eq2.77}).\\
{\bf Definition 4.1}\quad We say that $V$ is a weak solution of the
equation (\ref{eq2.77}), if

{\rm(i)} $V\in L^2([0,T];H), i.e.,$
$$\int^T_0\|V(t)\|^2_Hdt=\int^T_0(\int_{R^d}|V(t,x)|^2\rho(x)dx+\int_{R^d}|(\sigma^\ast\nabla
V)(t,x)|^2\rho(x)dx)dt<\infty.
$$

{\rm(ii)} For any nonnegative $\varphi\in C^{1,\infty}_c([0,T]\times
R^d)$ and for any $v\in\mathcal {U}$,
\begin{eqnarray}\label{eq2.79}
&&\int_{R^d}\int^T_s(V(r,x),\partial_r\varphi(r,x))drdx+\int_{R^d}(V(s,x),\varphi(s,x))dx\nonumber\\
&\geq&\int_{R^d}(h(x),\varphi(T,x))dx+\int_{R^d}\int^T_s(f(r,x,V,\sigma^\ast
\nabla V,v_r),\varphi(r,x))drdx\nonumber\\
&+&\int_{R^d}\int^T_s(g(r,x,V,\sigma^\ast
\nabla V),\varphi(r,x))dBrdx+\int_{R^d}\int^T_s(\mathcal {L}^v_rV(r,x),\varphi(r,x))drdx,
\end{eqnarray}
where $(L_rV(r,x),\varphi(r,x))=\int_{R^d}(\frac{1}{2}(\nabla
V\sigma)(\sigma^\ast\nabla\varphi)+Vdiv(b-A)\varphi )dx$ with
$A_i=\frac{1}{2}\sum^d_{k=1}\frac{\partial a_{k,i}}{\partial x_k}$.

{\rm(iii)} For any nonnegative $\varphi\in
C^{1,\infty}_c([0,T]\times R^d)$ and for any small $\varepsilon>0$,
there exists a control $v'\in\mathcal {U}$, such that
\begin{eqnarray}\label{eq2.80}
&&\int_{R^d}\int^T_s(V(r,x),\partial_r\varphi(r,x))drdx+\int_{R^d}(V(s,x),\varphi(s,x))dx-\varepsilon\nonumber\\
&\leq&\int_{R^d}(h(x),\varphi(T,x))dx+\int_{R^d}\int^T_s(f(r,x,V,\sigma^\ast
\nabla V,v'_r),\varphi(r,x))drdx\nonumber\\
&+&\int_{R^d}\int^T_s(g(r,x,V,\sigma^\ast
\nabla V),\varphi(r,x))dBrdx+\int_{R^d}\int^T_s(\mathcal {L}^{v'}_rV(r,x),\varphi(r,x))drdx.
\end{eqnarray}

\begin{lemma} Let $(\xi, f, g) $ and $(\xi^{\prime}, f^{\prime}, g) $ be two parameters of BDSDEs, each one satisfies all  the assumptions (H1), (H2) and (H3) with the exception that the Lipschitz condition  could be  satisfied by either $f $ or $f^{\prime} $ only and suppose in addition the following
\begin{eqnarray}
\xi \leq \xi^{\prime}, a . s, \quad f(t, y, z) \leq f^{\prime}(t, y, z), \text { a.s. a.e. } \quad \forall(y, z) \in R \times R^{d}.
\end{eqnarray}
Let $(Y, Z) $ be a solution of the BDSDE with parameter$(\xi, f, g) $ and $(Y^{\prime}, Z^{\prime})$  a solution of the  BDSDE with parameter $(\xi^{\prime}, f^{\prime}, q) $.  Then

\begin{eqnarray}
Y_{t} \leq Y_{t}^{\prime}, \quad \text { a.e. }  \quad \forall ~0\leq t \leq T.
\end{eqnarray}
\end{lemma}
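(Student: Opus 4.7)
The plan is to mimic the classical comparison argument for BDSDEs (as in Shi--Gu, cited in the introduction): set $\Delta Y = Y - Y'$, $\Delta Z = Z - Z'$, $\Delta f = f - f'$, $\Delta g = g(\cdot,Y,Z) - g(\cdot,Y',Z')$, and apply the Tanaka--Meyer formula to the positive part $(\Delta Y)^+$, more precisely to $\bigl((\Delta Y_t)^+\bigr)^2$. The terminal condition drops out because $\xi \leq \xi'$ gives $(\Delta Y_T)^+ = (\xi - \xi')^+ = 0$. Because we are in the BDSDE setting, the Itô formula has the extra feature that the backward stochastic integral $\int g\, dB$ contributes a quadratic-variation term with the \emph{opposite} sign from the $\int Z\, dW$ term, which is what makes the assumption $\alpha<1$ essential.

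Carrying out the Itô/Tanaka step on $[t,T]$ and taking expectations (both the $\int (\Delta Y)^+ \Delta Z\, dW$ and the $\int (\Delta Y)^+ \Delta g\, dB$ terms are martingales, hence have zero expectation), I obtain an identity of the form
\begin{eqnarray*}
E\bigl((\Delta Y_t)^+\bigr)^2 + E\!\int_t^T \mathbf{1}_{\{\Delta Y_s>0\}}\|\Delta Z_s\|^2\,ds
&=& 2E\!\int_t^T (\Delta Y_s)^+ \Delta f(s)\,ds \\
&&+\; E\!\int_t^T \mathbf{1}_{\{\Delta Y_s>0\}}\|\Delta g(s)\|^2\,ds.
\end{eqnarray*}
The crucial splitting is $\Delta f(s) = \bigl[f(s,Y_s,Z_s)-f'(s,Y_s,Z_s)\bigr] + \bigl[f'(s,Y_s,Z_s)-f'(s,Y'_s,Z'_s)\bigr]$: the first bracket is $\leq 0$ by the hypothesis $f\leq f'$, so it contributes nothing positive when multiplied by $(\Delta Y_s)^+$; the second bracket is controlled by Lipschitz continuity of $f'$, yielding $L\bigl(|\Delta Y_s| + \|\Delta Z_s\|\bigr)$ on $\{\Delta Y_s>0\}$.

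The main obstacle, and the step requiring care, is the treatment of the $\|\Delta g\|^2$ term. Using (H2) I bound $\|\Delta g(s)\|^2 \leq L|\Delta Y_s|^2 + \alpha\|\Delta Z_s\|^2$, so after combining with the $\|\Delta Z\|^2$ term on the left I get a coefficient $(1-\alpha)>0$ in front of $E\int_t^T \mathbf{1}_{\{\Delta Y>0\}}\|\Delta Z\|^2\,ds$. I then apply Young's inequality $2L(\Delta Y)^+\|\Delta Z\| \leq \varepsilon\mathbf{1}_{\{\Delta Y>0\}}\|\Delta Z\|^2 + \tfrac{L^2}{\varepsilon}\bigl((\Delta Y)^+\bigr)^2$ with $\varepsilon < 1-\alpha$, so that the $\|\Delta Z\|^2$ contribution can be absorbed into the left-hand side.

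What remains is an inequality of the form $E\bigl((\Delta Y_t)^+\bigr)^2 \leq C\int_t^T E\bigl((\Delta Y_s)^+\bigr)^2\,ds$, to which Gronwall's lemma applies backwards in time and forces $E\bigl((\Delta Y_t)^+\bigr)^2 = 0$ for every $t\in[0,T]$. Hence $(\Delta Y_t)^+ = 0$ a.s., i.e.\ $Y_t \leq Y'_t$ a.s.\ for each $t$, which is the assertion. The only delicate points are checking that the backward stochastic integral is a true martingale (guaranteed by the $\mathcal{S}^p \times \mathcal{H}^p$ integrability of the solutions together with the linear growth controls from (H2)) and being careful about the sign of the $dB$ quadratic variation when writing down Itô's formula; the rest is a routine adaptation of the BSDE comparison argument.
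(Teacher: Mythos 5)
Your argument is correct and is essentially the proof the paper points to: the paper gives no details, simply deferring to the comparison theorem of Shi, Gu and Liu \cite{SGK2005}, and your Tanaka--Meyer computation on $\bigl((\Delta Y)^{+}\bigr)^{2}$ --- killing the terminal term via $\xi\le\xi'$, splitting $\Delta f$ into a sign-definite part plus a Lipschitz increment, absorbing the backward-integral quadratic variation using $\alpha<1$, and closing with Young and Gronwall --- is exactly that standard argument. The only cosmetic remark is that when the Lipschitz condition holds for $f$ rather than $f'$ one uses the symmetric splitting $\Delta f=[f(s,Y_s,Z_s)-f(s,Y'_s,Z'_s)]+[f(s,Y'_s,Z'_s)-f'(s,Y'_s,Z'_s)]$, which changes nothing essential.
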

\noindent
The proof is similar to the proof in \cite{SGK2005}.

\begin{lemma}\label{lemma3}
Let(H3.4)(H3.5)(H3.7)and(H4,1) hold, then for any $v\in \mathcal{U}$, the value
function satisfies
\begin{equation*}
\begin{aligned}\label{eq25}
&V(s,x^{t,x,v}_s)\geq\\
&E\{\int^{s'}_sf(r,x^{t,x,v},y^{t,x,v}_r,z^{t,x,v},v_r)dr+g(r,x^{t,x,v},y^{t,x,v}_r,z^{t,x,v})dBr+&V(s',x^{t,x,v}_{s'})|\mathcal
{F}^{W}_{t,\,s}\vee\mathcal{F}^{B}_{s,\,T}\}\\
\end{aligned}
\end{equation*}
\begin{flalign}
& &\forall t\leq s\leq s'\leq T.
\end{flalign}
 and for any small $\varepsilon>0$, there exists a $v'\in \mathcal{U}$,
such that
\begin{equation*}
\begin{aligned}\label{eq26}
&V(s,x^{t,x,v'}_s)-\varepsilon\leq\\
&E\{\int^{s'}_sf(r,x^{t,x,v'},y^{t,x,v'}_r,z^{t,x,v'},v_r)dr+g(r,x^{t,x,v'},y^{t,x,v'}_r,z^{t,x,v'})&dBr+V(s',x^{t,x,v'}_{s'})|\mathcal
{F}^{W}_{t,\,s}\vee\mathcal{F}^{B}_{s,\,T}\}\\
\end{aligned}\end{equation*}
\begin{flalign}
& &\forall t\leq s\leq s'\leq T.
\end{flalign}
\end{lemma}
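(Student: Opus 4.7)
The plan is to translate the dynamic programming principle (Theorem 3.11) into a conditional-expectation identity by applying it with the random initial data $(s, X^{t,x,v}_s)$ and then killing the forward stochastic integral $\int_s^{s'} Z_r\,dW_r$ in the defining BDSDE by a suitable conditional expectation.

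First (the $\geq$ inequality). Fix $v \in \mathcal{U}$ and $t \leq s \leq s' \leq T$. The extension of Theorem 3.11 to random initial data at $(s, X^{t,x,v}_s)$---obtained by approximating $X^{t,x,v}_s$ by simple $\mathcal{F}_s$-measurable functions in the spirit of Lemma 3.8 and passing to the limit with the continuity estimates of Proposition 3.4---yields
$$V(s, X^{t,x,v}_s) = \mathop{\mathrm{ess\,sup}}_{w \in \mathcal{U}}\, G^{s, X^{t,x,v}_s; w}_{s, s'}\bigl[V(s', X^{s, X^{t,x,v}_s; w}_{s'})\bigr].$$
Specializing to $w = v$ and using the flow identity $X^{s, X^{t,x,v}_s; v}_{s'} = X^{t,x,v}_{s'}$ gives $V(s, X^{t,x,v}_s) \geq \widetilde Y_s$, where $(\widetilde Y_r, \widetilde Z_r)_{r \in [s, s']}$ is the BDSDE solution with terminal value $V(s', X^{t,x,v}_{s'})$ and drivers $f(r, X^{t,x,v}_r, \cdot, \cdot, v_r)$, $g(r, X^{t,x,v}_r, \cdot, \cdot)$. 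Writing out this BDSDE at time $s$ and taking $E[\,\cdot \mid \mathcal{F}^W_{t,s} \vee \mathcal{F}^B_{s,T}]$ annihilates $\int_s^{s'} \widetilde Z_r\,dW_r$, leaving precisely the first inequality of the lemma.

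Second (the $\leq \varepsilon$ inequality). Fix an arbitrary auxiliary control $u$ and set $x_s := X^{t,x,u}_s$. Lemma 3.9 applied at random initial data $(s, x_s)$ produces, for every $\varepsilon > 0$, a control $w \in \mathcal{U}$ such that $V(s, x_s) - \varepsilon \leq G^{s, x_s; w}_{s, s'}\bigl[V(s', X^{s, x_s; w}_{s'})\bigr]$. Concatenate the two pieces into $v'_r := u_r \mathbf{1}_{[t,s)}(r) + w_r \mathbf{1}_{[s,T]}(r)$; the flow property gives $X^{t,x,v'}_s = x_s$ and $X^{t,x,v'}_{s'} = X^{s, x_s; w}_{s'}$, so the inequality rewrites in terms of $v'$ and $X^{t,x,v'}$, and the same conditional-expectation step as in the first case delivers the second inequality of the lemma.

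The principal obstacle is justifying $E\bigl[\int_s^{s'} \widetilde Z_r\,dW_r \bigm| \mathcal{F}^W_{t,s} \vee \mathcal{F}^B_{s,T}\bigr] = 0$: one must check that enlarging the natural filtration of $W$ by the $B$-future $\mathcal{F}^B_{s,T}$ preserves the true-martingale property of the forward $W$-integral on $[s, s']$, which rests on the mutual independence of $W$ and $B$ together with $\widetilde Z \in \mathcal{H}^2$ provided by Proposition 2.1. A secondary technical point is the measurable selection of the $\varepsilon$-optimal control $w$ at the random state $x_s$, which uses the $\mathcal{F}^B_{s,T}$-measurability of $V$ established in Proposition 3.5.
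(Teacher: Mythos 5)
Your proposal matches the paper's own argument: the paper likewise applies the dynamic programming principle along the flow to get $V(s,x^{t,x,v}_s)=\operatorname{ess\,sup}_{v}G^{t,x,v}_{s,s'}[V(s',x^{t,x,v}_{s'})]$, identifies $G^{t,x,v}_{s,s'}[\cdot]$ with the BDSDE solution $\widetilde y^{t,x,v}_s$, and rewrites it as the conditional expectation with respect to $\mathcal{F}^W_{t,s}\vee\mathcal{F}^B_{s,T}$, which annihilates the $dW$-integral; the two inequalities then follow from the essential supremum exactly as you describe. The technical points you flag (the martingale property under the enlarged filtration via independence of $W$ and $B$, and the selection of the $\varepsilon$-optimal control) are precisely what the paper leaves implicit in "it is not hard to finish the proof."
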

\begin{proof}According to the theory of dynamic programming principle we have got above,
\begin{equation}
V\left(s, x_{s}^{t , x, v}\right)=\operatorname{ess} \sup _{v \in \mathcal{U}} G_{s, s^{\prime}}^{t, x, v}\left[V\left(s^{\prime}, x_{s^{\prime}}^{t, x, v}\right)\right], \quad \forall t \leq s \leq s^{\prime} \leq T.
\end{equation}
Then we set
\begin{equation}
G_{s, s^{\prime}}^{t, x, v}\left[V\left(s^{\prime}, x_{s^{\prime}}^{t, x, v}\right)\right]:=\widetilde{y}_{s}^{t, x, v}
\end{equation}
is the solution of following BDSDE:
\begin{eqnarray}
\widetilde{y}_{s}^{t, x, v}&=&V(s^{\prime}, x_{s^{\prime}}^{t, x, v})+\int_{s}^{s^{\prime}} f(r, x_{r}^{t, x, v}, \widetilde{y}_{r}^{t, x, v},\widetilde{z}_{r}^{t, x, v},v_{r}) d r+\int_{s}^{s^{\prime}} g(r, x_{r}^{t, x, v}, \widetilde{y}_{r}^{t, x, v},\widetilde{z}_{r}^{t, x, v}) dB r\nonumber\\
&-&\int_{s}^{s^{\prime}} \widetilde{z}_{r}^{t, x, v} d W_{r}, ~~~i.e.,
\end{eqnarray}

\begin{equation}
\widetilde{y}_{s}^{t, x, v}=E\left\{\int_{s}^{s^{\prime}} f(r, x_{r}^{t, x, v}, \widetilde{y}_{r}^{t, x, v},\widetilde{z}_{r}^{t, x, v},v_{r}) d r+g(r,x^{t,x,v},\widetilde y^{t,x,v}_r,\widetilde{z}_{r}^{t, x, v}dBr+V\left(s^{\prime}, x_{s^{\prime}}^{t, x, v}\right) | \mathcal
{F}^{W}_{t,\,s}\vee\mathcal{F}^{B}_{s,\,T}\right\}.
\end{equation}
Then it is no hard to finish the proof.
\end{proof}
\begin{lemma} For each $t\in[0,T]$, $x$ and $x'\in\mathbb{R}^{n}$, we have

{\rm(i)} $\left(Y_{t}^{n}-V_{t}\right)^{-}\rightarrow0 $ in $S^{2};$

{\rm(ii)} $\left(Y_{t}^{p}-V_{t}\right)^{-}\rightarrow0 $ in $S^{2}$.
\end{lemma}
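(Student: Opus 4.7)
The plan is to follow a penalization argument in the spirit of Bally--Matoussi, using Lemma 4.3 as a substitute for the missing Doob--Meyer decomposition for $V$ in the BDSDE setting. I expect $Y^n$ to be the solution of a BDSDE whose generator is augmented by a penalty term of the form $n(V_r-Y^n_r)^{+}$, so that whenever $Y^n$ drops below $V$ a large restoring force pushes it back up; the assertion $(Y^n-V)^{-}\to 0$ then encodes the fact that in the limit $Y^n\ge V$. First, writing $\Delta_r:=Y^n_r-V_r$, I would combine the penalized BDSDE for $Y^n$ with the integral inequality for $V$ supplied by Lemma 4.3(i) along the state flow $x^{t,x,v}$, to produce a single integral identity/inequality for $\Delta_r$ on $[t,T]$.

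Next I apply the It\^o--Pardoux--Peng formula to $|\Delta_s^{-}|^2$. Four kinds of terms should appear: a strongly coercive term $-2n\int_t^T|\Delta_r^{-}|^2\,dr$ generated by the penalty; Lipschitz contributions from $f$ bounded by $C(|\Delta_r^{-}|^2+|\widetilde Z_r|^2\mathbf{1}_{\{\Delta_r<0\}})$; the $g$-term, where the strict contraction $\alpha<1$ from (H3.4) lets one absorb a fraction of $\int|\widetilde Z_r|^2\,dr$ into the left-hand side; and the forward and backward stochastic integrals, which vanish in expectation. Gronwall's inequality then yields $E\int_t^T|\Delta_r^{-}|^2\,dr=O(1/n)$ and $E|\Delta_t^{-}|^2=O(1/n)$. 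Burkholder--Davis--Gundy applied to the forward $dW_r$ and backward $dB_r$ martingale parts, with reabsorption via Young's inequality, then upgrades this to $E[\sup_{t\le s\le T}|\Delta_s^{-}|^2]\to 0$, which is exactly (i). Part (ii) follows by the same template but starting from Lemma 4.3(ii), i.e.\ the $\varepsilon$-suboptimality inequality with an $\varepsilon$-optimal control $v'$; the extra $\varepsilon$ term is handled by letting $\varepsilon\downarrow 0$ together with $p\to\infty$, or via a diagonal extraction.

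The main obstacle is the very first step. As the authors emphasize in the introduction, there is no Doob--Meyer decomposition available for BDSDEs, so $V$ is not a priori a semimartingale and It\^o's formula cannot be applied to $V$ directly. The comparison has to be carried out between the genuine BDSDE dynamics of $Y^n$ and the integral inequality satisfied by $V$ along the state flow, rather than via a clean It\^o identity for $|Y^n-V|^2$. Correctly accounting for the backward $dB_r$-integral --- whose martingale property is relative to a backward filtration --- while keeping the sign conventions that make the penalization term $-2n|\Delta_r^{-}|^2$ dominate the Lipschitz error terms is the technically delicate point, and is where the strict contraction $\alpha<1$ in the $g$-Lipschitz condition is indispensable.
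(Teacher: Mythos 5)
There is a genuine gap, and it sits exactly where you yourself locate ``the main obstacle'': your argument cannot get off the ground. Applying the It\^o--Pardoux--Peng formula to $|(Y^n_s-V_s)^-|^2$ presupposes a semimartingale decomposition of $s\mapsto V(s,x^{t,x,v}_s)$, but no such decomposition is available at this stage --- producing one (the increasing process $K^{t,x,v}$ of Lemma 4.6) is the very purpose of the penalization scheme, and the present lemma is an input to that construction. What Lemma 4.3 supplies is only the one-sided conditional-expectation inequality $V(s,x^{t,x,v}_s)\geq E[\,\cdots+V(s',x^{t,x,v}_{s'})\mid \mathcal F^W_{t,s}\vee\mathcal F^B_{s,T}]$, i.e.\ a $g$-supersolution property; an inequality of this type cannot be ``combined'' with the penalized BDSDE into an integral identity for $\Delta_r=Y^n_r-V_r$ amenable to It\^o's formula, because it carries no drift or quadratic-variation information about $V$. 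Consequently the coercive term $-2n\int_t^T|\Delta_r^-|^2\,dr$, the Gronwall step, and the $O(1/n)$ rate are all conditional on a hypothesis ($V$ an It\^o process along the flow) that is exactly what is missing. You correctly name the obstacle but do not resolve it, so the proof is incomplete at its first step.

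The paper avoids It\^o's formula on $V$ entirely. It compares $Y^n$, via the comparison theorem for BDSDEs, with the solution $\tilde Y^n$ of the \emph{linear} penalized equation whose generator is frozen at $(Y^n,Z^n)$; this $\tilde Y^n$ can be written explicitly with the exponential weight $e^{-n(s-\nu)}$ at an arbitrary stopping time $\nu$. As $n\to\infty$ the $f$- and $g$-integrals vanish in $L^2$ (at rate $n^{-1/2}$) and $\tilde Y^n_\nu\to \xi\mathbf 1_{\{\nu=T\}}+V_\nu\mathbf 1_{\{\nu<T\}}$, whence $Y_\nu\geq V_\nu$ a.s.\ for every stopping time; the section theorem of Dellacherie--Meyer then gives $Y_t\geq V_t$ for all $t$ a.s., so $(Y^n_t-V_t)^-\searrow 0$, Dini's theorem makes the convergence uniform in $t$, and dominated convergence (using $(Y^n_t-V_t)^-\leq |V_t|+|Y^0_t|$) yields the $S^2$ statement. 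A secondary point: part (ii) is not about the $\varepsilon$-optimal control of Lemma 4.3(ii); it is the same statement with the dummy index $p$ in place of $n$, needed in Lemma 4.6 to control the cross terms $\int(Y^n-V)^-\,dK^p$ and $\int(Y^p-V)^-\,dK^n$, so no separate argument and no $\varepsilon\downarrow 0$ diagonalization are required.
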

\begin{proof}
The proof is similar to the proof in  \cite{AM2007}. Since $Y_{t}^{n} \geq Y_{t}^{0}$,  we can  replace $V_{t}$ by $V_{t} \vee Y_{t}^{0} ,$ so assume that $E\left(\sup _{t \leq T} V_{t}^{2}\right)<\infty$ ,We first want to compare a.s. $Y_{t}$ and $S_{t}$ for all $t \in[0, T],$ while we do not know yet that $Y$ is a.s. continuous. From the comparison theorem for BDSDE's, we have that a.s. $Y_{t}^{n} \geq \tilde{Y}_{t}^{n}, 0 \leq t \leq T$ $n \in N,$ where $\left\{\tilde{Y}_{t}^{n}, \tilde{Z}_{t}^{n} ; 0 \leq t \leq T\right\}$ is the unique solution of the BDSDE:
 \begin{eqnarray}
\tilde{Y}_{t}^{n}&=&\xi+\int_{t}^{T} f(s, X_s,Y_{s}^{n}, Z_{s}^{n},V_s) d s+n \int_{t}^{T}(V_{t}-\tilde{Y}_{s}^{n}) d s+\int_{t}^{T} g(s, X_s,Y_{s}^{n}, Z_{s}^{n}) d B_{s}\nonumber\\
&-&\int_{t}^{T} \tilde{Z}_{s}^{n} d W_{s}.
 \end{eqnarray}
Let $\nu$ be a stopping time such that $0 \leq \nu \leq T .$ Then
\begin{eqnarray}
\tilde{Y}_{t}^{n}&=& E^{\mathcal{F}_{\nu}}[e^{-n(T-\nu)} \xi+\int_{\nu}^{T} e^{-n(s-\nu)} f(s, X_s,Y_{s}^{n}, Z_{s}^{n},V_s) d s+n \int_{\nu}^{T} e^{-n(s-\nu)} V_{s} d s] \nonumber\\ &+&\int_{\nu}^{T} e^{-n(s-\nu)} g(s,X_s Y_{s}^{n}, Z_{s}^{n}) d B_{s}.
 \end{eqnarray}
It is easily seen that
$$
e^{-n(T-\nu)} \xi+n \int_{\nu}^{T} e^{-n(s-\nu)} V_{s} d s \rightarrow \xi \mathbf{1}_{\nu=T}+V_{\nu} \mathbf{1}_{\nu<T}
,$$
a.s. and in ${L}^{2},$ and the conditional expectation converges also in ${L}^{2} $. Moreover,
$$\left|\int_{\nu}^{T} e^{-n(s-\nu)} f\left(s, Y_{s}^{n}, Z_{s}^{n}\right) d s\right| \leq \frac{1}{\sqrt{2 n}}\left(\int_{0}^{T} f^{2}\left(s, Y_{s}^{n}, Z_{s}^{n}\right) d s\right)^{\frac{1}{2}},$$
hence $E^{\mathcal{F}_{\nu}} \int_{\nu}^{T} e^{-n(s-\nu)} f\left(s, Y_{s}^{n}, Z_{s}^{n}\right) d s \rightarrow 0$ in ${L}^{2},$ as $n \rightarrow \infty$
and\\
 \begin{eqnarray*}
E(\int_{\nu}^{T} g(s, Y_{s}^{n}, Z_{s}^{n}) d B_{s})^{2} &\leq &c E \int_{0}^{T} e^{-2 n(s-\nu)} g^{2}(s, Y_{s}^{n}, Z_{s}^{n}) d s\\
&\leq& \frac{c}{4 n} E \int_{0}^{T} g^{4}\left(s, Y_{s}^{n}, Z_{s}^{n}\right) d s \rightarrow 0.
 \end{eqnarray*}
Consequently, $\tilde{Y}_{s}^{n} \rightarrow \xi \mathbf{1}_{\nu=T}+S_{\nu} \mathbf{1}_{\nu<T}$ in mean square, and $Y_{\nu }\geq V_{\nu}$ a.s. From this and the section theorem in Dellacherie and Meyer \cite{DM1978}, it follows that a.s.
$$
Y_{t}^{n} \geq V_{t}, \quad 0 \leq t \leq T.
$$
Hence $(Y_{t}^{n}-V_{t})^{-} \searrow 0, 0 \leq t \leq T$, and from Dini's theorem the convergence is uniform in $t$.\\
Since $\left(Y_{t}^{n}-V_{t}\right)^{-} \leq\left(V_{t} -Y_{t}^{0}\right)^{+} \leq\left|V_{t}\right|+\left|Y_{t}^{0}\right|$, we have
$$
\lim_{n\rightarrow+\infty}E(\sup_{0\leq t\leq T}|Y_{t}^{n}-V_{t})^{-}|^{2})=0.
$$
by the dominated convergence theorem.
\end{proof}
\noindent
Before lemma 4.6, we now introduce the BDSDE with increasing process:
 \begin{eqnarray}
 Y_{t}=\xi+\int_{t}^{T} f\left(s, Y_{s}, Z_{s}\right) d s+K_{T}-K_{t}+\int_{t}^{T} g\left(s, Y_{s}, Z_{s}\right) d B_{s}-\int_{t}^{T} Z_{s} d W_{s}, 0 \leq t \leq T.
 \end{eqnarray}
The solution of the equation is triple $(Y,Z,K)$  of $\mathcal{F}_{t}$ measurable and take value in $( {R}, {R}^{\mathrm{d}} , {R}_{+})$ and satisfying \\
(H4.2) $Z \in \mathcal {H}^{2}$.\\
(H4.3) $Y \in \mathcal {S}^{2}, \text { and } K_{T} \in \mathcal {L}^{2}$.\\
(H4.4) ${K_t}$
is a continuous and increasing process, $K_{0}=0$ and $\int_{0}^{T}\left(Y_{t}-V_{t}\right) d K_{t}=0 $.\\
\begin{lemma}\label{lemma8} We assume (H3.4)(H3.5)(H3.7)(H4.1)-(H4.4), then $V(s, x^{t, x, v}_s)$ is a
g-super
solution and $E|V(s, x^{t, x, v}_s)|^2<\infty$. Moreover there
exists a unique increasing process $(K^{t, x,v}_r)$ with $K^{t, x, v}_t=0$ and
$E[(K^{t,x, v}_T)^2]<\infty$ such that $V(s, x^{t, x, v}_s)$ coincides with the
unique solution $y^{t, x, v}_s$ of the BSDE:
\begin{eqnarray}\label{eq2.82}
y^{t,x,v}_t&=&V(T,x^{t,x,v}_T)+\int^T_tf(r,x^{t,x,v}_r,y^{t,x,v}_r,Z^{t,x,v}_r,v_r)dr+K^{t,x,v}_T-K^{t,x,v}_t\nonumber\\
&+&\int^T_tg(r,x^{t,x,v}_r,y^{t,x,v}_r,Z^{t,x,v}_r)dBr-\int^T_tZ^{t,x,v}_rdW_r.
\end{eqnarray}
where $Z^{t,x,v'}_r=\sigma^\ast\nabla V(r,x^{t,x,v'}_r)$ in the
sense of Definition 4.1..
\end{lemma}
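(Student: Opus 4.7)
The plan is to read the lemma as a nonlinear Doob--Meyer decomposition of the $g$-supersolution $V(\cdot,x^{t,x,v}_\cdot)$, following the scheme of Bally--Matoussi \cite{AM2007} adapted to BDSDEs. The integrability $E|V(s,x^{t,x,v}_s)|^2<\infty$ is immediate from Lemma 3.7(ii) and the moment bound on the flow in Proposition 3.1, and the $g$-supersolution property is precisely Lemma 4.3. The non-trivial content is therefore (i) the construction of a square-integrable increasing compensator $K^{t,x,v}$ that turns the supersolution inequality into the BDSDE equality (\ref{eq2.82}), (ii) the uniqueness of such a $K^{t,x,v}$, and (iii) the identification $Z^{t,x,v}_r=\sigma^{*}\nabla V(r,x^{t,x,v}_r)$ in the sense of Definition 4.1.

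\textbf{Construction via penalization.} Fix $v\in\mathcal{U}$ and, for each $n\ge 1$, let $(Y^{n},Z^{n})\in \mathcal{S}^{2}\times \mathcal{H}^{2}$ be the unique solution, obtained by the standard Picard iteration for Lipschitz BDSDEs, of the penalized equation
\begin{align*}
Y^{n}_{s} &= V(T,x^{t,x,v}_{T})+\!\int_{s}^{T}\!f(r,x^{t,x,v}_{r},Y^{n}_{r},Z^{n}_{r},v_{r})\,dr+n\!\int_{s}^{T}\!\bigl(V(r,x^{t,x,v}_{r})-Y^{n}_{r}\bigr)^{+}\,dr\\
&\quad+\int_{s}^{T}\!g(r,x^{t,x,v}_{r},Y^{n}_{r},Z^{n}_{r})\,dB_{r}-\int_{s}^{T}\!Z^{n}_{r}\,dW_{r},
\end{align*}
and set $K^{n}_{s}:=n\!\int_{t}^{s}(V(r,x^{t,x,v}_{r})-Y^{n}_{r})^{+}\,dr$, which is continuous, nondecreasing and vanishes at $t$. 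Lemma 4.3 together with the comparison argument already carried out in the proof of Lemma 4.5 (via the linearized auxiliary BDSDE $\tilde Y^n$) gives the correct ordering of $Y^{n}$ with respect to $V(\cdot,x^{t,x,v}_\cdot)$, and Lemma 4.5 itself delivers $Y^{n}\to V(\cdot,x^{t,x,v}_\cdot)$ in $\mathcal{S}^{2}$. Applying Itô's formula to $|Y^{n}-Y^{m}|^{2}$, the Lipschitz hypothesis (H3.4) and the contraction constraint $\alpha<1$ for the $g$-coefficient absorb the quadratic-variation contribution of the backward integral and yield the uniform bound $\sup_{n}E[(K^{n}_{T})^{2}]+\sup_{n}E\!\int_{t}^{T}\!\|Z^{n}_{r}\|^{2}\,dr<\infty$, together with Cauchy convergence of $(Y^{n},Z^{n},K^{n})$ in $\mathcal{S}^{2}\times\mathcal{H}^{2}\times\mathcal{S}^{2}$. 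Passing to the limit produces a triple $(V(\cdot,x^{t,x,v}_\cdot),Z^{t,x,v},K^{t,x,v})$ solving (\ref{eq2.82}) with the required integrability and continuity properties of $K^{t,x,v}$. Uniqueness of $K^{t,x,v}$ given $(y^{t,x,v},Z^{t,x,v})$ is immediate: two such decompositions yield a difference $K-K'$ that is continuous, of finite variation and a local martingale null at $t$, hence identically zero.

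\textbf{Identification of $Z$ and main obstacle.} To identify $Z^{t,x,v}_{r}=\sigma^{*}\nabla V(r,x^{t,x,v}_{r})$, I would test the limit decomposition against the random test function $\varphi_{t}(s,x)=\varphi(\widehat{x}^{t,x,v}_{s})J(\widehat{x}^{t,x,v}_{s})$ of Lemma 4.1, integrate in $\rho(x)\,dx$, and apply the norm-equivalence Lemma 4.2 to transfer the pathwise BDSDE identity into the weak PDE identity of Definition 4.1; matching the gradient term then forces the claimed formula $dr\otimes dP$-a.e. The principal technical obstacle is the uniform $L^{2}$-estimate on $K^{n}_{T}$: because the backward Itô integral $\int g\,dB_{r}$ contributes a non-standard quadratic-variation term in Itô's formula for $|Y^n|^2$, the estimate has to fully exploit $\alpha<1$ in (H3.4) to absorb the $Z^{n}$-dependence of $g$ before a Gronwall-type argument can close, exactly as in the original a priori estimates of Pardoux--Peng for BDSDE. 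A secondary subtlety is that continuity of the limit $K^{t,x,v}$ requires uniform-in-$s$ (not merely $\mathcal S^{2}$) convergence of $Y^{n}$, which is obtained via a Dini-type argument from the monotonicity of $Y^{n}$ in $n$.
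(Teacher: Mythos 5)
Your proposal follows the paper's route essentially step for step: penalization with $n\int_s^T(Y^n_r-V(r,x^{t,x,v}_r))^-\,dr$, uniform second-moment bounds on $(Y^n,Z^n,K^n)$ obtained from It\^o's formula with $\alpha<1$ absorbing the backward quadratic variation, monotone convergence of $Y^n$ plus a Cauchy argument for $Z^n$ and $K^n$, continuity of the limit via uniform convergence, and identification of $Z^{t,x,v}$ with $\sigma^{*}\nabla V$ through the stochastic flow and the norm-equivalence lemma. So the architecture is the same as the paper's.

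The one step you cannot outsource as written is the identification of the monotone limit with $V(\cdot,x^{t,x,v}_\cdot)$. You claim that ``Lemma 4.5 itself delivers $Y^n\to V(\cdot,x^{t,x,v}_\cdot)$ in $\mathcal S^2$,'' but Lemma 4.5 only shows $(Y^n-V(\cdot,x^{t,x,v}_\cdot))^-\to0$, i.e.\ that the increasing limit $Y$ satisfies $Y\ge V(\cdot,x^{t,x,v}_\cdot)$. Generic penalization theory stops there: it produces the reflected solution, the \emph{minimal} $g$-supersolution dominating the obstacle, which a priori may lie strictly above $V$. The substance of the lemma is the reverse inequality $Y^n\le V(\cdot,x^{t,x,v}_\cdot)$, and its only possible source is the dynamic programming principle: Lemma 4.3 makes $V(\cdot,x^{t,x,v}_\cdot)$ itself a $g$-supersolution above the obstacle on every subinterval, and comparison then dominates each $Y^n$ by $V$. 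The paper devotes the second half of its proof (the chain of estimates on $E|K^{t,x,v;n}_T|^2$ and $E\int_t^T|Z^{t,x,v;n}_r|^2\,dr$ leading to $n^2\int_t^T(V(s,x^{t,x,v}_s)-y^{t,x,v;n}_s)^2\,ds\le C$) to exactly this identification. You gesture at ``the correct ordering'' but attribute the conclusion to the wrong lemma; as stated your argument would equally ``prove'' the false claim that the reflected solution above an arbitrary obstacle coincides with that obstacle. A smaller point: your uniqueness argument (continuous finite-variation local martingale) only treats $K$ with $(y,Z)$ fixed; to obtain uniqueness of $Z$ as well one needs the It\^o computation on the squared difference, as in the paper.
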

\begin{proof}Since the solution of the BDSDE is no longer a super-martingale, the method of proof in Lemma 4.1.\cite{WWZ}  will fail in our situation. The ides of proof comes from the the properties of BDSDE and limitation theory.
According to the penalization method and the comparasion theorem
\begin{eqnarray}
f_n(s,x,y,z,v)=f(s,x,y,z,v_s)+n(y-V_s)^+.
\end{eqnarray}
For each $n\in\mathcal {N}$, we ~denote $(Y^n,Z^n)$ the unique pair of $\mathcal{F}_t$ measureable process with valued in $R \times R^{d}$ is the solution of
 \begin{eqnarray}\label{eq2.85}
Y_{t}^{n}&=&V(T,X_T)+\int_{t}^{T} f\left(s, X,Y_{s}^{n}, Z_{s}^{n},V_s\right) d s+n \int_{t}^{T}\left(Y_{s}^{n}-V_{s}\right)^{-} d s\nonumber\\
&+&\int_{t}^{T} g\left(s, X,Y_{s}^{n}, Z_{s}^{n}\right) d B_{s}-\int_{t}^{T} Z_{s}^{n} d W_{s}.
\end{eqnarray}

We denote
$$K^{n}_t=\int _{0}^{t}(Y_{s}^{n}-V_{s})^{-}d {s}.$$\\
First we prove $(Y,Z)$ is the limit of $(Y^n,Z^n)$. We know $f_{n}(t, y, z) \leq f_{n+1}(t, y, z)$, from comparison theorem, $Y_t^{n}\leq Y_t^{n+1}, 0\leq t\leq T$. Therefore
 \begin{eqnarray}
 Y_{t}^{n} \uparrow Y_{t}, \quad 0 \leq t \leq T, \quad \text { a.e. }
\end{eqnarray}
Moreover $ y^{t,x,v}_t$ is bounded by $V(t,x^{t,x,v}_t)$ and
according to the result from \cite{AM2007}
 \begin{eqnarray}
E\left(\sup _{0 \leq t \leq T}\left|Y_{t}^{n}\right|^{2}\right)+E \int_{t}^{T}\left|Z_{s}^{n}\right|^{2} d s+E\left[\left(K_{T}^{n}\right)^{2}\right] \leq c, \quad n \in \mathbf{N}.
\end{eqnarray}
It follows from the Fatou lemma that
$E\left(\sup _{0 \leq t \leq T} |Y_{t}|^{2}\right) \leq c$,
then by the dominated convergence,
 \begin{eqnarray}\label{eq2.83}
E \int_{0}^{T}\left(Y_{t}-Y_{t}^{n}\right)^{2} d t \rightarrow 0, \quad \text { as } \quad n \rightarrow \infty.
 \end{eqnarray}
 Next, we desire to prove $
E \int_{0}^{T}\left(Z_{t}-Z_{t}^{n}\right)^{2} d t \rightarrow 0, $ as $ \quad n \rightarrow \infty$.
Applying It\^{o}'s formula to the proces$|Y_{t}^n-Y_{t}^p|^{2}$.

\begin{eqnarray*}
&\hspace{2mm}&\left|Y_{t}^{n}-Y_{t}^{p}\right|^{2}+\int_{t}^{T}\left|Z_{s}^{n}-Z_{s}^{p}\right|^{2} d s \\
&=&2 \int_{t}^{T}\left[f\left(s, X_s, Y_{s}^{n}, Z_{s}^{n},V_s\right)-f\left(s, X_s, Y_{s}^{p}, Z_{s}^{p},V_s\right)\right]\left(Y_{s}^{n}-Y_{s}^{p}\right) d s\\
&+&\int_{t}^{T}\left|g\left(s, X_s, Y_{s}^{n}, Z_{s}^{n}\right)-g\left(s, X_s,Y_{s}^{p}, Z_{s}^{p}\right)\right|^{2} d s \\
&+&2 \int_{t}^{T}\left[g\left(s, X_s, Y_{s}^{n}, Z_{s}^{n}\right)-g\left(s,  X_s,Y_{s}^{p}, Z_{s}^{p}\right)\right]\left(Y_{s}^{n}-Y_{s}^{p}\right) d B_{s}-2 \int_{t}^{T}\left(Y_{s}^{n}-Y_{s}^{p}\right)\left(Z_{s}^{n}-Z_{s}^{p}\right) d W_{s}\\
&+&\int_{t}^{T}\left(Y_{s}^{n}-Y_{s}^{p}\right) d\left(K_{s}^{n}-K_{s}^{p}\right).
\end{eqnarray*}

\begin{eqnarray*}
&\hspace{2mm}&E\left(\left|Y_{t}^{n}-Y_{t}^{p}\right|^{2}\right)+E \int_{t}^{T}\left|Z_{s}^{n}-Z_{s}^{p}\right|^{2} d s\\
&\leq&2KE \int_{t}^{T}\left(\left|Y_{s}^{n}-Y_{s}^{p}\right|^{2}+\left|Y_{s}^{n}-Y_{s}^{p}\right| \cdot\left|Z_{s}^{n}-Z_{s}^{p}\right|\right) d s+K E \int_{t}^{T}\left|Y_{s}^{n}-Y_{s}^{p}\right|^{2} d s\\
&+&\alpha E \int_{t}^{T}\left|Z_{s}^{n}-Z_{s}^{p}\right|^{2} d s+2 E \int_{t}^{T}\left(Y_{s}^{n}-V_{s}\right)^{-} d K_{s}^{p}+2 E \int_{t}^{T}\left(Y_{s}^{p}-V_{s}\right)^{-} d K_{s}^{n}\\
&\leq&\left(3 K+K^{2} \frac{2}{1-\alpha}\right) E \int_{t}^{T}\left|Y_{s}^{n}-Y_{s}^{p}\right|^{2} d s+\frac{1+\alpha}{2} E \int_{t}^{T}\left|Z_{s}^{n}-Z_{s}^{p}\right|^{2} d s\\
&+&2E \int_{t}^{T}\left(Y_{s}^{n}-V_{s}\right)^{-} d K_{s}^{p}+2 E \int_{t}^{T}\left(Y_{s}^{p}-V_{s}\right)^{-} d K_{s}^{n}.\\
\end{eqnarray*}
\begin{eqnarray*}
\hspace{2mm}E \int_{t}^{T}\left|Z_{s}^{n}-Z_{s}^{p}\right|^{2} d s &\leq& c\Bigg (
E \int_{t}^{T}\left|Y_{s}^{n}-Y_{s}^{p}\right|^{2} d s +
\left(E\left(\sup _{0 \leq t \leq T}\left|\left(Y_{t}^{n}-V_{t}\right)^{-}\right|^{2}\right) \cdot E\left(K_{T}^{p}\right)^{2}\right)^{\frac{1}{2}}\\
&+&\left(E\left(\sup _{0 \leq t \leq T}\left|\left(Y_{t}^{p}-V_{t}\right)^{-}\right|^{2}\right) \cdot E\left(K_{T}^{n}\right)^{2}\right)^{\frac{1}{2}}\Bigg ).\\
\end{eqnarray*}
 According to the Lemma 4.5£¬ we  prove that $E\left(sup_{0\leq t\leq T}\left|(Y_t^{n}-V_t)^{-}\right|^{2}\right)\rightarrow0$, as $n\rightarrow\infty$.\\Hence
\begin{eqnarray}
E \int_{0}^{T}\left(Z_{t}^{n}-Z_{t}^{p}\right)^{2} d t \rightarrow 0,
E \int_{0}^{T}\left(Y_{t}^{n}-Y_{t}^{p}\right)^{2} d t \rightarrow 0, ~~as  ~~n,p\rightarrow\infty.
\end{eqnarray}
Now we begin to prove $Y$ is continuous.\\
\begin{eqnarray*}
&\hspace{2mm}&\left|Y_{t}^{n}-Y_{t}^{p}\right|^{2}+\int_{t}^{T}\left|Z_{s}^{n}-Z_{s}^{p}\right|^{2} d s\\
&=&2 \int_{t}^{T}\left[f\left(s, X_s, Y_{s}^{n}, Z_{s}^{n},V_s\right)-f\left(s, X_s,Y_{s}^{p}, Z_{s}^{p},V_s\right)\right]\left(Y_{s}^{n}-Y_{s}^{p}\right) d s\\
&+&\int_{t}^{T}\left|g\left(s, X_s,Y_{s}^{n}, Z_{s}^{n}\right)-g\left(s, X_s, Y_{s}^{p}, Z_{s}^{p}\right)\right|^{2} d s\\
&+&2 \int_{t}^{T}\left[g\left(s, X_s,Y_{s}^{n}, Z_{s}^{n}\right)-g\left(s, X_s, Y_{s}^{p}, Z_{s}^{p}\right)\right]\left(Y_{s}^{n}-Y_{s}^{p}\right) d B_{s}-2 \int_{t}^{T}\left(Y_{s}^{n}-Y_{s}^{p}\right)\left(Z_{s}^{n}-Z_{s}^{p}\right) d W_{s}\\
&+&2 \int_{t}^{T}\left(Y_{s}^{n}-Y_{s}^{p}\right) d\left(K_{s}^{n}-K_{s}^{p}\right).\\
\end{eqnarray*}
\begin{eqnarray*}
&\hspace{2mm}&\sup _{0 \leq t \leq T}\left|Y_{t}^{n}-Y_{t}^{p}\right|^{2} \leq 2 \int_{0}^{T}\left| f(s,X_s, Y_s^{n}, Z_{s}^{n},V_s £©-f\left(s, X_s,Y^{p}_s, Z_{s}^{p},V_s\right)\right| \cdot\left|Y_{s}^{n}-Y_{s}^{p}\right| d s\\
&+&2 \sup _{0 \leq t \leq T}\left|\int_{t}^{T} g\left(s, X_s,Y^{n}_s, Z_{s}^{n}\right)-g\left(s, X_s,Y^{p}_s, Z_{s}^{p}\right)\left(Y_{s}^{n}-Y_{s}^{p}\right) d B_{s}\right|\\
&+&2 \sup _{0 \leq t \leq T}\left|\int_{t}^{T}\left(Y_{s}^{n}-Y_{s}^{p}\right)\left(Z_{s}^{n}-Z_{s}^{p}\right) d W_{s}\right|+\int_{0}^{T}\left|g\left(s, X_s, Y^{n}_s, Z_{s}^{n}\right)-g\left(s, X_s,Y^{p}_s, Z_{s}^{p}\right)\right|^{2} d s\\
&+&2\int_{0}^{T}\left(Y_{s}^{p}-V_{s}\right)^{-} d K_{s}^{n}+2 \int_{0}^{T}\left(Y_{s}^{n}-S_{s}\right)^{-} d K_{s}^{p}.
\end{eqnarray*}
\text { From Burkholder-Davis-Gundy inequality},
\begin{eqnarray*}
&\hspace{2mm}&E \sup _{0 \leq t \leq T}\left|Y_{t}^{n}-Y_{t}^{p}\right|^{2} \leq \frac{1}{2} E \sup _{0 \leq t \leq T}\left|Y_{t}^{n}-Y_{t}^{p}\right|^{2}+c E \int_{0}^{T}\left(\left|Y_{s}^{n}-Y_{s}^{p}\right|^{2}+\left|Z_{s}^{n}-Z_{s}^{p}\right|^{2}\right) d s\\
&+&\left(E\left[\sup _{0 \leq t \leq T}\left|\left(Y_{t}^{n}-V_{t}\right)^{-}\right|^{2}\right] \cdot E\left|K_{T}^{p}\right|^{2}\right)^{\frac{1}{2}}
+\left(E\left[\sup _{0 \leq t \leq T}\left|\left(Y_{t}^{p}-V_{t}\right)^{-}\right|^{2}\right] \cdot E\left|K_{T}^{n}\right|^{2}\right)^{\frac{1}{2}}.
\end{eqnarray*}
We  get $E\left(\sup _{0 \leq t \leq T}\left|Y_{t}^{n}-Y_{t}^{p}\right|^{2}\right) \rightarrow 0$  as $n, p \rightarrow \infty$.~~$Y^{n}$ convergence uniformly in $t$ to $Y,$ a.s.
hence $Y$ is continuous.\\
In addition, we have denoted that $K^{n}_t$ is a increasing process with $E\left(\left(K_{T}^{n}\right)^{2}\right) \leq C$, it is obvious that $K_{T}<\infty, \text { a.s. }$

\begin{eqnarray*}
&\hspace{2mm}&E(\sup _{0 \leq t \leq T}|K_{t}^{n}-K_{t}^{p}|^{2})\leq c\left.\{E \sup _{0 \leq t \leq T}|Y_{s}^{n}-Y_{s}^{p}|^{2}+E|Y_{0}^{n}-Y_{0}^{p}|^{2}\right.\\
&+&E \int_{0}^{T}(f(s, X_s,Y_{s}^{n}, Z_{s}^{n},V_s)-f(s, X_s, Y_{s}^{p}, Z_{s}^{p},V_s)^{2} d s\\
&+&E(\sup_{0 \leq t \leq T} | \int_{0}^{t} g(s, Y_{s}^{n}, Z_{s}^{n})-g(s, Y_{s}^{p}, Z_{s}^{p}) d B_{s}|)\\
&+&\left.E(\sup _{0 \leq t \leq T}|\int_{0}^{t}(Z_{s}^{n}-Z_{s}^{p}) d W_{s}|)\right.\}.\\
\end{eqnarray*}
From the Lipschitz conditions and the Burkholder-Davis-Gundy inequality, we have
\begin{eqnarray}\label{eq2.84}
E\left(\sup _{0\leq t \leq T}\left(K_{t}^{n}-K_{t}^{p}\right)^{2}\right) \rightarrow 0, \quad \text { as } \quad n, p \rightarrow \infty.
\end{eqnarray}
It remains to check that~$\int_{0}^{T}\left(Y_{t}-V_{t}\right) d K_{t}=0$.\\
According to (\ref{eq2.83}) and (\ref{eq2.84}), we have
$$\int_{0}^{T}\left(Y_{s}^{n}-V_{s}\right) d K_{s}^{n} \longrightarrow \int_{0}^{T}\left(Y_{s}-V_{s}\right) d K_{s}$$
as $n\longrightarrow\infty$. Moreover $Y_{t}\leq V_{t}$,  a.s.\\
we obtain
$$\int_{0}^{T}\left(Y_{s}^{n}-V_{s}\right) d K_{s}^{n}=-n \int_{0}^{T}\left|\left(Y_{s}^{n}-V_{s}\right)^{-}\right|^{2} d s \leq 0 \text {, a.s. }$$
\noindent
Finally, we take the limit of both sides of the equation of (\ref{eq2.85}), then
we have equation(31).
The proof of the uniqueness are derived from the proof of the Proposition 1.6 in the [12].\\
If there exist another solution $K_r^{t^{*},x,v}$and $Z^{t^{*},x,v}$satisfing equation(\ref{eq2.85}), then we apply $I\hat{t}o$ formula to$(y_t-y_t)^{2}\equiv{0}$ on the $[0,T] $ and take expectation
\begin{eqnarray*}
E \int_{t}^{T}\left|Z_{s}^{t, x, v}-Z_{s}^{t, x, v}\right|^{2} d s+E\left[\left(K_{T}^{t, x, v}-K_{T}^{t, x, v}\right)-\left(K_{t}^{t, x, v}-K_{t}^{t, x, v}\right)\right]^{2}=0.
\end{eqnarray*}
therefore $Z^{t, x, v} \equiv Z^{t^{*}, x, v}, K_{r}^{t, x, v} \equiv K_{r}^{t^{*}, x, v}$ for any $t\in[0,T]$.\\
We device that $E \int_{0}^{T}\left(Z_{t}^{n}-Z_{t}^{p}\right)^{2} d t \rightarrow 0$, by the lemma 4.1 in \cite{OT2006}  , we know that
\begin{eqnarray*}
Z_{r}^{t, x, v}=\sigma^{*} \nabla y_{r}^{t, x, v}=\sigma^{*} \nabla V\left(r, x_{r}^{t, x, v}\right).
\end{eqnarray*}
Then it remain to prove that $y_{r}^{t, x, v}=V(r, x_{r}^{t, x, v})$.
From the BDSDE(13), we have
\begin{eqnarray*}
K^{t,x,v;n}_T-K^{t,x,v;n}_t&=&y^{t,x,v;n}_t-V(T,x^{t,x,v}_T)-\int^T_tf(r,x^{t,x,v}_r,y^{t,x,v;n}_r,z^{t,x,v;n}_r,v_r)dr\\
&-&\int^T_tg(r,x^{t,x,v}_r,y^{t,x,v;n}_r,z^{t,x,v;n}_r)dBr+\int^T_tZ^{t,x,v;n}_rdW_r\nonumber\\
&\leq&|y^{t,x,v;n}_t|+|V(T,x^{t,x,v}_T)|+\int^T_tf(r,0,0,0,v_r)dr\nonumber\\
&+&\int^T_t(K|x^{t,x,v}_r|+K|y^{t,x,v;n}_r|+K|Z^{t,x,v;n}_r|)dr+|\int^T_tZ^{t,x,v,n}_rdW_r|\nonumber\\
&+&|\int^T_tg(r,0,0,0)dBr|+\int^T_t(K|x^{t,x,v}_r|+K|y^{t,x,v;n}_r|+K|Z^{t,x,v;n}_r|)dBr\\
&\leq&|V(t,x)|+|V(T,x^{t,x,v}_T)|+\int^T_tf(r,0,0,0,v_r)dr\nonumber\\
&+&\int^T_t(K|x^{t,x,v}_r|+K|y^{t,x,v;1}_r|+K|V(r,x^{t,x,v}_r)|+K|Z^{t,x,v,n}_r|)dr\nonumber\\
&+&\int^T_t(K|x^{t,x,v}_r|+K|y^{t,x,v;1}_r|+K|V(r,x^{t,x,v}_r)|+K|Z^{t,x,v;n}_r|)dBr\\
&+&|\int^T_tg(r,0,0,0)dBr|+|\int^T_tZ^{t,x,v,n}_rdW_r|.
\end{eqnarray*}
because for any  $v_r$, $E\int^T_t|f(r,0,0,0,v_r)|^2dr\leq M$. We
observe that $y^{t,x,v;i}_r$ is dominated by
$|y^{t,x,v;1}_r|+|V(r,x^{t,x,v}_r)|$. From equation (\ref{eq2.85}) we have
\begin{eqnarray*}
&&E|K^{t,x,v;n}_T|^2\nonumber\leq13|V(t,x)|^2+13E|g(x^{t,x,v}_T)|^2+13E\int^T_t|f(r,0,0,0,v_r)|^2dr\nonumber\\
&&+13E\int^T_t(K^2|x^{t,x,v}_r|^2+K^2|y^{t,x,v;1}_r|^2+K^2|V(r,x^{t,x,v}_r)|^2+K^2|Z^{t,x,v,n}_r|^2)dr\nonumber\\
&&+13\int^T_t(K^2|x^{t,x,v}_r|^2+K^2|y^{t,x,v;1}_r|^2+K^2|V(r,x^{t,x,v}_r)|^2+K^2|Z^{t,x,v,n}_r|^2|)dr\\
&&+13\int^T_t|g(r,0,0,0)dr|^2+13E\int^T_t|Z^{t,x,v,i}_r|^2dr.
\end{eqnarray*}
Thus we can define a $C_3(t,T,x,v)$, independent of $n$, such that
\begin{eqnarray}
E|K^{t,x,v;n}_T|^2\leq
C_3(t,T,x,v)+8(K^2+1)E\int^T_t|Z^{t,x,v;n}_r|^2dr.
\end{eqnarray}
On the other hand, we use It$\widehat{o}$'s formula to~$|y^{t,x,v;n}_r|^2$.
\begin{eqnarray*}
&&|y^{t,x,v;n}_t|^2+E\int^T_t|Z^{t,x,v;n}_r|^2dr\nonumber\\
&=&E|V(T,x^{t,x,v}_T)|^2+2E\int^T_ty^{t,x,v;n}_rf(r,x^{t,x,v}_r,y^{t,x,v;n}_r,Z^{t,x,v;n}_r,v_r)dr\\
&+&E\int^T_t|g(r,x^{t,x,v}_r,y^{t,x,v;n}_r,z^{t,x,v;n}_r)|^2dr+2E\int^T_ty^{t,x,v;n}_rdK^{t,x,v;n}_r\nonumber\\
&\leq&E|V(T,x^{t,x,v}_T)|^2\\
&+&2E\int^T_t|y^{t,x,v;n}_r|(|(f(r,0,0,0,v_r)|+K|x^{t,x,v}_r|+K|y^{t,x,v;n}_r|+K|Z^{t,x,v;n}_r|)dr\\
&+&E\int^T_t(|g(r,0,0,0)+K|x^{t,x,v}_r|+K|y^{t,x,v;n}_r|+K|Z^{t,x,v;n}_r|)^{2}dr\\
&+&2E\int^T_ty^{t,x,v;n}_rdK^{t,x,v;n}_r\\
&\leq&E|V(T,x^{t,x,v}_T)|^2+\int^T_t|f(r,0,0,0,v_r)|^2dr+E\int^T_t|y^{t,x,v;n}_r|^2dr\\
&+&E\int^T_tK^2|y^{t,x,v;n}_r|^2+|x^{t,x,v;n}_r|^2dr+E\int^T_t(2K^2+2K)|y^{t,x,v;n}_r|^2+\frac{1}{2}|Z^{t,x,v;n}_r|^2dr\\
&+&E\int^T_t|4|g(r,0,0,0)|^2+4K^2|x^{t,x,v}_r|^2+4K^2|y^{t,x,v;n}_r|^2+4K^2|Z^{t,x,v,n}_r|^2|dr\\ &+&2E[K^{t,x,v;n}_T \sup_{t\leq s\leq T}|y^{t,x,v;n}_s|]\nonumber\\
&\leq&E|V(T,x^{t,x,v}_T)|^2+E\int^T_t|f(r,0,0,0,v_r)|^2dr+(4K^2+1)E\int^T_t|x^{t,x,v}_r|^2dr\\
&+&(7K^2+2K+1)E\int^T_t[|y^{t,x,v;1}_r|^2+|V(r,x^{t,x,v}_r)|^2]dr+(4K^2+\frac{1}{2})E\int^T_t|Z^{t,x,v;n}_r|^2dr\nonumber\\
&+&4E\int^T_t|g(r,0,0,0)|^2dr+\frac{1}{32(K^2+1)}E|K^{t,x,v;n}_T|^2+64(K^2+1)E\sup_{t\leq s\leq
T}[|y^{t,x,v;1}_s|^2+|V(s,x^{t,x,v}_s)|^2].
\end{eqnarray*}
Then we can define a $C_4(t,T,x,v)$ satifying
\begin{eqnarray*}
E\int_{t}^{T}\left|Z_{r}^{t, x, v ; n}\right|^{2} d r \quad \leq \quad C_{4}(t, T, x, v)+\frac{1}{16\left(K^{2}+1\right)} E\left|K_{T}^{t, x, v ; n}\right|^{2}.
\end{eqnarray*}
Then we have
\begin{eqnarray*}
E\left|K_{T}^{t, x, v ; n}\right|^{2} \leq 2 C_{3}(t, T, x, v)+16\left(K^{2}+1\right) C_{4}(t, T, x, v),
\end{eqnarray*}
it follows that
\begin{eqnarray*}
n^{2} \int_{t}^{r}\left(V\left(s, x_{s}^{t, x, v}\right)-y_{s}^{t, x, v ; i}\right)^{2} d s \leq 2 C_{3}(t, T, x, v)+16\left(K^{2}+1\right) C_{4}(t, T, x, v).
\end{eqnarray*}
Let $ n\rightarrow \infty$, we get $y_{r}^{t, x, v}=V\left(r, x_{r}^{t, x, v}\right)$.\\
On the other hand, for any small $\varepsilon>0$, there exists a control $v^{\prime}\in\mathcal{U}$, $V\left(r, x_{r}^{t, x, v^{\prime}}\right)$
satisfying
\begin{eqnarray*}
V\left(s, x_{s}^{t, x, v^{\prime}}\right) \leq Y_{s}^{s, x_{s}^{t, x, v^{\prime}} }+\varepsilon.
\end{eqnarray*}
\end{proof}
\begin{lemma}\label{lemma9} We assume  (H3.4)(H3.5)(H3.7)(H4.1)-(H4.4),
then $V(s,x^{t,x,v'}_s)$ is a g-
supersolution. Same as the proof of lemma 3.6, there exists a unique increasing
process $(A^{t,x,v'}_r)$ with $A^{t,x,v'}_t=0$ and $E[(A^{t,x,v'}_T)^2]<\infty$ such that
$V(s,x^{t,x,v'}_s)$ coincides with the unique solution $y^{t,x,v}_s$
of the BSDE:
\begin{eqnarray}\label{eq2.335}
y^{t,x,v'}_t&=&V(T,x^{t,x,v'}_T)+\varepsilon+\int^T_tf(r,x^{t,x,v'}_r,y^{t,x,v'}_r,Z^{t,x,v'}_r,v'_r)dr\nonumber\\
&&-(K^{t,x,v'}_T-K^{t,x,v'}_t)+\int^T_tg(r,x^{t,x,v'}_r,y^{t,x,v'}_r,Z^{t,x,v'}_r)dBr-\int^T_tZ^{t,x,v'}_rdW_r.
\end{eqnarray}
where $Z^{t,x,v'}_r=\sigma^\ast\nabla V(r,x^{t,x,v'}_r)$ in the
sense of Definition 4.1..
\end{lemma}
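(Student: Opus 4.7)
The plan is to mirror the proof of Lemma 4.6 with the sign of the penalization reversed. For the $\varepsilon$-optimal control $v'$, Lemma 4.4 (ii) tells us that $V(\cdot,x^{t,x,v'}_\cdot)$ plays the role of a ``$g$-subsolution'' with an $\varepsilon$-slack rather than a $g$-supersolution, so the Skorohod-type correction needed to identify $y^{t,x,v'}_r$ with $V(r,x^{t,x,v'}_r)$ must be a nondecreasing process \emph{subtracted} from the driver, which is precisely the form of equation~(\ref{eq2.335}).

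First, I would introduce penalized BDSDEs in which the driver is $f(r,x^{t,x,v'}_r,y,z,v'_r)-n(y-V(r,x^{t,x,v'}_r)-\varepsilon)^{+}$ and the terminal condition is $V(T,x^{t,x,v'}_T)+\varepsilon$. By the comparison theorem (Lemma 4.3) the solutions $y^n$ are monotone in $n$ and the associated processes $K^n_t:=n\int_0^t(y^n_r-V(r,x^{t,x,v'}_r)-\varepsilon)^{+}\,dr$ form a monotone family. I would then establish uniform $L^2$ bounds on $(y^n,z^n,K^n)$ exactly as in Lemma 4.6, using It\^o on $|y^n|^2$ together with the contraction condition $\alpha<1$ on $g$ to absorb the $z$-contribution produced by the backward integrator.

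The second step is to show that $(y^n,z^n,K^n)$ is Cauchy in $\mathcal{S}^2\times\mathcal{H}^2\times\mathcal{S}^2$. Applying It\^o to $|y^n-y^p|^2$ produces, besides the standard Lipschitz terms, the cross contribution $\int(y^n-y^p)\,d(K^n-K^p)$, which with the sign-reversed penalization is estimated by a sign-reversed analogue of Lemma 4.5, yielding $(y^n-V-\varepsilon)^{+}\to 0$ in $\mathcal{S}^2$; the backward $B$-integral is controlled by the Burkholder--Davis--Gundy inequality and the $\alpha$-contractivity of $g$. Passing to the limit then gives a triple $(y^{t,x,v'},z^{t,x,v'},K^{t,x,v'})$ solving (\ref{eq2.335}) together with the flat-off condition $\int_0^T(y^{t,x,v'}_r-V(r,x^{t,x,v'}_r)-\varepsilon)\,dK^{t,x,v'}_r=0$, and the identification $z^{t,x,v'}_r=\sigma^{\ast}\nabla V(r,x^{t,x,v'}_r)$ follows from Lemma~\ref{lemma7} and Lemma~4.1 of \cite{OT2006}, exactly as in Lemma 4.6.

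Finally, to conclude $y^{t,x,v'}_r=V(r,x^{t,x,v'}_r)$, I would replay the two-step bound at the end of Lemma 4.6: first bound $E|K^n_T|^2\le C_3+C_4\,E\int|z^n|^2\,dr$, then bound $E\int|z^n|^2\,dr$ via It\^o on $|y^n|^2$, and combine the two to get a uniform-in-$n$ estimate on $n^2\,E\int_t^T\bigl((y^n_r-V(r,x^{t,x,v'}_r)-\varepsilon)^{+}\bigr)^{2}\,dr$. The main obstacle is precisely this Cauchy estimate and flat-off step: because we are in the doubly stochastic setting, the It\^o expansion contains a $|g(\cdot,y^n,z^n)-g(\cdot,y^p,z^p)|^2$ term and a backward $dB$ integral whose joint treatment forces a careful use of the $\alpha<1$ assumption so that the $z$-contribution can be absorbed on the left-hand side; once this is in place, the rest of the argument is a sign-reversed transcription of Lemma 4.6.
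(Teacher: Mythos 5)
Your proposal coincides with the paper's intent: the paper's entire proof of this lemma is the single remark that it is ``similar to that of Lemma 4.6,'' and your sign-reversed penalization (driver $f-n(y-V-\varepsilon)^{+}$ with terminal value $V(T,x^{t,x,v'}_T)+\varepsilon$, uniform $L^2$ bounds, the Cauchy argument absorbing the $z$-terms via $\alpha<1$, the flat-off condition, and the identification $Z=\sigma^{\ast}\nabla V$ through the norm-equivalence lemma) is exactly that transcription, carried out in more detail than the paper itself provides. The only caveat is inherited from the paper rather than introduced by you, namely the loose bookkeeping of the $\varepsilon$-slack coming from Lemma 4.4(ii) when it is propagated to the terminal time.
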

\noindent
The proof of Lemma 4.7. is similar to that of Lemma 4.6..
\begin{theorem}\label{th2} Under the assumption (H3.4)(H3.5)(H3.7)(H4.1)-(H4.4), the value function $V(t,x)$ defined in
{\rm(\ref{eq2.76})} is the unique Sobolev solution of the PDE
{\rm(\ref{eq2.77})}.
\end{theorem}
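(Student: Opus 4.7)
The plan is to prove existence and uniqueness separately, in both cases leveraging the BDSDE-with-increasing-process representations of $V(s,x_s^{t,x,v})$ established in Lemma \ref{lemma8} and Lemma \ref{lemma9}, and converting these pathwise identities into the weak formulation by testing against $\varphi_t(s,x)=\varphi(\widehat{x}^{t,x,v}_s)J(\widehat{x}^{t,x,v}_s)$ and integrating in $x$ against the weight $\rho$.

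\textbf{Existence.} First I would verify the regularity requirement $V\in L^2([0,T];H)$ by combining the uniform estimate $E\sup_{s}|y^{t,x,v;n}_s|^2+E\int|Z^{t,x,v;n}_r|^2dr\le C$ from the proof of Lemma \ref{lemma8} with the norm equivalence result (Lemma \ref{lemma7}) applied to $Z^{t,x,v}_r=\sigma^{*}\nabla V(r,x_r^{t,x,v})$, which transfers the $H^2$--bound on the flow into an $L^2(\rho)$--bound for $\sigma^{*}\nabla V$. For the inequality (ii) of Definition 4.1, I fix an arbitrary $v\in\mathcal{U}$ and start from the BDSDE (with increasing process $K^{t,x,v}$) obtained in Lemma \ref{lemma8}:
\begin{equation*}
V(s,x^{t,x,v}_s)=V(T,x^{t,x,v}_T)+\int_s^T f(r,x^{t,x,v}_r,V,\sigma^{*}\nabla V,v_r)\,dr+(K_T-K_s)+\int_s^T g\,dB_r-\int_s^T \sigma^{*}\nabla V\,dW_r.
\end{equation*}
I would apply Itô's formula to the product $V(s,x_s^{t,x,v})\varphi_t(s,x)$ using the backward semimartingale decomposition of $\varphi_t(s,\cdot)$ from Lemma \ref{lemma6}, integrate over $x\in\mathbb{R}^d$ against $\rho$, and change variables via $(u\circ x_s^{t,\cdot,v},\varphi)=(u,\varphi_t(s,\cdot))$. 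The $L^{*}_r$ term on the test-function side pairs with $V$ to produce the operator $\mathcal{L}^v_r V$ in the variational form (here one uses the adjoint definition of $\mathcal L$ given in Definition 4.1), the $dW$ cross-variation cancels the gradient term correctly, and the $\varphi\,dK$ term is nonnegative. Because $\varphi\ge 0$ and $dK\ge 0$, this nonnegative contribution is what yields the one-sided inequality (ii). The inequality (iii), with an $\varepsilon$-optimal control $v'$, is obtained by the same computation starting from Lemma \ref{lemma9}, where the constant $\varepsilon$ from the BDSDE translates directly into the $\varepsilon$ on the left-hand side of (\ref{eq2.80}).

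\textbf{Uniqueness.} Suppose $V'$ is any Sobolev weak solution of (\ref{eq2.77}). Taking the test function to be $\varphi_t(s,x)$ again and reversing the computation above (this is essentially Bally--Matoussi's method together with Lemma \ref{lemma6}), one can identify $V'(s,x_s^{t,x,v})$ as the first component of a solution to a BDSDE with a nondecreasing process whose Skorohod-type minimality comes from the $essup$ in (ii)--(iii). Then, using the comparison Lemma 4.3 together with the uniqueness of the increasing-process BDSDE shown at the end of Lemma \ref{lemma8} (the vanishing of $\int(y-y)^2\equiv 0$ argument), one concludes $V'(s,x_s^{t,x,v})=Y_s^{t,x,v}$ for every admissible $v$. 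Evaluating at $s=t$ and taking $essup$ over $v\in\mathcal{U}$, the dynamic programming principle (Theorem 3.11) and Lemma 3.8 give $V'(t,x)=u(t,x)=V(t,x)$.

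\textbf{Main obstacle.} The delicate point is the treatment of the backward Itô integral $\int g\,dB_r$ under the pairing against $\varphi_t(s,x)$, since $\varphi_t(s,\cdot)$ itself carries forward $W$-noise through the inverse flow while the $B$-integration is backward in time. One must verify that the cross quadratic variation $\langle W,B\rangle=0$ keeps the two stochastic integrals decoupled after the change of variable, and that Fubini between $dB_r$ and $dx$ is legitimate under the $L^2(\rho)$ integrability supplied by (H3.5). The second delicate point is that the increasing process $K^{t,x,v}$ from Lemma \ref{lemma8} depends on $v$, so proving that its weak limit under $essup$ still gives a nonnegative measure (so as to preserve the one-sided inequality) requires the stability estimate already used in the proof of Theorem 3.11 together with the Fatou-type argument from Lemma 4.5. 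Once these two technical points are dispatched, the passage from the pointwise BDSDE to the weak form, and back again for uniqueness, is a direct consequence of the norm equivalence Lemma \ref{lemma7} and the adjoint structure in Definition 4.1.
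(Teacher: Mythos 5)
Your existence argument matches the paper's: Lemmas 4.6 and 4.7 give the BDSDE-with-increasing-process representation of $V$ along the flow; dropping the nonnegative $dK$ term (resp.\ keeping the $-dK$ and the $\varepsilon$) yields the two one-sided pathwise inequalities; testing against $\varphi_t(s,x)=\varphi(\widehat{x}^{t,x,v}_s)J(\widehat{x}^{t,x,v}_s)$ and using Lemma 4.1 to trade the $dW$-integral of $\sigma^{\ast}\nabla V$ for $\mathcal{L}^{v}V$ plus the $\partial_r\varphi$ term gives exactly (ii) and (iii) of Definition 4.1; and the $L^2([0,T];H)$ membership follows from the norm equivalence of Lemma 4.2 as you describe. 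The technical points you flag (decoupling of the backward $dB$ integral from the forward flow, integrability for Fubini) are indeed the ones absorbed into the Bally--Matoussi machinery.

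The uniqueness step is where your write-up goes wrong as stated. You claim that comparison plus uniqueness of the increasing-process BDSDE yields $V'(s,x^{t,x,v}_s)=Y^{t,x,v}_s$ for \emph{every} admissible $v$; this cannot be right, since the right-hand side genuinely depends on $v$, and an equality for all $v$ would make your subsequent essential supremum step vacuous. What the paper actually extracts, after undoing the weak formulation via Lemma 4.1 and changing variables $y=\widehat{x}^{t,x,v}_r$, is a pair of one-sided statements: from inequality (ii) of Definition 4.1, $\overline V(s,x^{t,y,v}_s)$ dominates the solution $\overline y^{t,y,v}_s$ of the BDSDE driven by $(h,f,g,v)$, so the comparison theorem gives $\overline V(s,x^{t,y,v}_s)\ge Y^{s,x^{t,y,v}_s,v}_s$ for every $v$; from inequality (iii), for the $\varepsilon$-optimal $v'$ one gets $\overline V(s,x^{t,y,v'}_s)\le Y^{s,x^{t,y,v'}_s,v'}_s+\varepsilon/\int_{R^d}\varphi(y)dy$. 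Only the combination of these two, followed by the supremum over $v$ and $\varepsilon\downarrow0$, identifies $\overline V(t,y)$ with $\sup_{v}Y^{t,y,v}_t=V(t,y)$. No Skorohod-type minimality and no uniqueness of the increasing process are invoked (or needed) in this direction: the increasing process enters only through the sign of its contribution, which is what turns the weak inequalities into super/sub-solution statements amenable to comparison.
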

\begin{proof} Existence: In the stochastic recursive optimal control
problem, the value function $V(t,x)$ defined by (\ref{eq2.76})
satisfies  the Bellman's dynamic programming principle. By Lemma
\ref{lemma8} and Lemma \ref{lemma9}, we know that, for any $v\in
\cal{U}$, there have a unique increasing process $A^{t,x,v}_s$,
$V(s,x^{t,x,v}_s)$ satisfy the following BDSDE:
\begin{eqnarray}\label{eq2.336}
V(s,x^{t,x,v}_s)&=&V(T,x^{t,x,v}_T)+\int^T_sf(r,x^{t,x,v}_r,V(r,x^{t,x,v}_r),\sigma^\ast
\nabla V(r,x^{t,x,v}_r),v_r)dr+(K^{t,x,v}_T-K^{t,x,v}_s)\nonumber\\
&+&\int^T_sg(r,x^{t,x,v}_r,V(r,x^{t,x,v}_r),\sigma^\ast
\nabla V(r,x^{t,x,v}_r))dBr-\int^T_s\sigma^\ast\nabla
V(r,x^{t,x,v}_r)dW_r.
\end{eqnarray}
So it follows easily that
\begin{eqnarray}\label{eq2.337}
V(s,x^{t,x,v}_s)&\geq&
V(T,x^{t,x,v}_T)+\int^T_sf(r,x^{t,x,v}_r,V(r,x^{t,x,v}_r),\sigma^\ast
\nabla
V(r,x^{t,x,v}_r),v_r)dr\nonumber\\
&+&\int^T_sg(r,x^{t,x,v}_r,V(r,x^{t,x,v}_r),\sigma^\ast
\nabla V(r,x^{t,x,v}_r))dBr-\int^T_s\sigma^\ast\nabla V(r,x^{t,x,v}_r)dW_r.~~~~~
\end{eqnarray}
On the other hands, for any small $\varepsilon>0$ there exists a control $v'\in
\cal{U}$, such that $V(s,x^{t,x,v'}_s)$ satisfies the following BSDE:
\begin{eqnarray}\label{eq2.338}
V(s,x^{t,x,v'}_s)-\varepsilon&=&
V(T,x^{t,x,v'}_T)+\int^T_sf(r,x^{t,x,v'}_r,V(r,x^{t,x,v'}_r),\sigma^\ast
\nabla
V(r,x^{t,x,v'}_r),v'_r)dr\nonumber\\
&-&(K^{t,x,v'}_T-K^{t,x,v'}_s)+\int^T_sg(r,x^{t,x,v'}_r,V(r,x^{t,x,v'}_r),\sigma^\ast
\nabla V(r,x^{t,x,v'}_r))dBr\nonumber\\
&-&\int^T_s\sigma^\ast\nabla
V(r,x^{t,x,v'}_r)dW_r.
\end{eqnarray}
Then we have
\begin{eqnarray}\label{eq2.339}
&&V(s,x^{t,x,v'}_s)-\varepsilon\leq
V(T,x^{t,x,v'}_T)+\int^T_sf(r,x^{t,x,v'}_r,V(r,x^{t,x,v'}_r),\sigma^\ast
\nabla
V(r,x^{t,x,v'}_r),v'_r)dr\nonumber\\&&+\int^T_sg(r,x^{t,x,v'}_r,V(r,x^{t,x,v'}_r),\sigma^\ast
\nabla V(r,x^{t,x,v'}_r))dBr-\int^T_s\sigma^\ast\nabla
V(r,x^{t,x,v'}_r)dW_r.
\end{eqnarray}
 We can deduce by the equivalence of norm result (Lemma \ref{lemma7})
that $V\in L^2([t,T],H)$. Indeed, in the stochastic recursive
optimal control problem,  the cost function can be regarded as a
solution of BSDE:
\begin{eqnarray*}\label{eq2.340}
Y^{t,x,v}_s&=&h(x^{t,x,v}_T)+\int^T_sf(r,x^{t,x,v}_r,Y^{t,x,v}_r,Z^{t,x,v}_r,v_r)dr+\int^T_tg(r,x^{t,x,v}_r,y^{t,x,v}_r,z^{t,x,v}_r)dBr\\
&-&\int^T_sZ^{t,x,v}_rdW_r.
\end{eqnarray*}
By usual estimates of BSDEs,  (H3.4)(H3.5)(H3.7)(H4.1)-(H4.4) we know
that
\begin{eqnarray}\label{eq2.341}
&&\int_{R^d}E(|Y^{t,x,v}_t|^2+\int^T_t
|Z^{t,x,v}_r|^2dr)\rho (x)dx\nonumber\\
&\leq& K\int_{R^d}E
|h(x^{t,x,v}_T)|^2\rho(x)dx+K\int_{R^d}\int^T_tE|f(r,x^{t,x,v}_r,0,0,v_r)|^2dr\rho(x)dx\nonumber\\
&\leq&KC\int_{R^d}
|h(x)|^2\rho(x)dx+KC\int_{R^d}\int^T_t|f(r,x,0,0,v_r)|^2dr\rho(x)dx\nonumber\\
&\leq&KC\int_{R^d}
|h(x)|^2\rho(x)dx+KC\int_{R^d}\int^T_tC_1dr\rho(x)dx\nonumber\\
&=&KC\int_{R^d} |h(x)|^2\rho(x)dx+(T-t)C_1CK<\infty.
\end{eqnarray}
So for any $v$, $Y^{t,x,v}_r\in H$, where $H$ is a Hilbert space. Note
$V(t,x)=\sup_{v\in\mathcal{U}}Y^{t,x,v}_t$. Since $U$ is compact set of $R^k$
and Lemma 6.2 in \cite{P1997}, we know that
 $V(t,\cdot)$ is also in
$H$.
Next because (\ref{eq2.337}) holds, then for any nonnegative
$\varphi\in C^{\infty}_c(R^d)$, we have
\begin{eqnarray}\label{eq2.342}
&&\int_{R^d}(V(s,x^{t,x,v}_s),\varphi(x))dx\nonumber\\
&\geq&
\int_{R^d}(V(T,x^{t,x,v}_T),\varphi(x))dx+\int_{R^d}\int^T_s(f(r,x^{t,x,v}_r,V(r,x^{t,x,v}_r),\sigma^\ast\nabla V(r,x^{t,x,v}_r),v_r),\varphi(x))drdx\nonumber\\
&+\!&\!\int_{R^d}\!\int^T_s(g(r,x,V,\sigma^\ast
\nabla V(r,x^{t,x,v}_r)),\varphi(x))dBrdx\!-\!\int_{R^d}\!\int^T_s(\sigma^\ast\nabla
V(r,x^{t,x,v}_r),\varphi(x))dW_rdx.
\end{eqnarray}
It turns out that
\begin{eqnarray}\label{eq2.343}
&&\int_{R^d}(V(s,x),\varphi(s,x))dx\nonumber\\
&\geq&
\int_{R^d}(h(x),\varphi(T,x))dx+\int_{R^d}\int^T_s(f(r,x,V(r,x),\sigma^\ast\nabla V(r,x),v_r),\varphi(r,x))drdx\nonumber\\
&+&\!\int_{R^d}\!\int^T_s(g(r,x,V,\sigma^\ast
\nabla V(r,x)),\varphi(r,x))dBrdx-\!\int_{R^d}\!\int^T_s(\sigma^\ast\nabla V(r,x),\varphi(r,x))dW_rdx.
\end{eqnarray}
Furthermore, using Lemma \ref{lemma6}, we have that
\begin{eqnarray}\label{eq2.344}
&&-\int_{R^d}\int^T_s\sigma^\ast\nabla
V(r,x)\varphi(r,x)dW_rdx\nonumber\\
&=&-\int_{R^d}\sum^d_{j=1}\int^T_s(\sum^d_{i=1}\sigma_{i,j}(r,x)\frac{\partial
V}{\partial x_i}(r,x),\varphi(r,x))dW^j_r\nonumber\\
&=&\int_{R^d}\int^T_s(\mathcal
{L}^vV(r,x),\varphi(r,x))dr-\int_{R^d}\int^T_s(V(r,x),\partial_r\varphi(r,x))drdx.
\end{eqnarray}
Taking (\ref{eq2.344}) into (\ref{eq2.343}), we have that
\begin{eqnarray}\label{eq2.345}
&&\int_{R^d}\int^T_s(V(r,x),\partial_r\varphi(r,x))drdx+\int_{R^d}(V(s,x),\varphi(s,x))dx\nonumber\\
&\geq&\int_{R^d}(h(x),\varphi(T,x))dx+\int_{R^d}\int^T_s(f(r,x,V(r,x),\sigma^\ast\nabla
V(r,x),v_r),\varphi(r,x))drdx\nonumber\\
&+&\int_{R^d}\int^T_s(g(r,x,V(r,x),\sigma^\ast
\nabla V),\varphi(r,x))dBrdx+\int_{R^d}\int^T_s(\mathcal {L}^v_rV(r,x),\varphi(r,x))drdx.
\end{eqnarray}
By virtue of the same techniques, because
(\ref{eq2.339}) holds, so for any nonnegative  $\varphi\in
C^{\infty}_c(R^d)$, we take
$\varepsilon=\frac{\varepsilon'}{\int_{R^d}\varphi(x)dx}$, then

\begin{eqnarray}\label{eq2.346}
&&\int_{R^d}(V(s,x^{t,x,v'}_s),\varphi(x))dx-\varepsilon'\nonumber\\
&\leq&
\int_{R^d}(h(x^{t,x,v'}_T),\varphi(x))dx\nonumber+\int_{R^d}\int^T_s(f(r,x^{t,x,v'}_r,V(r,x^{t,x,v'}_r),\sigma^\ast\nabla V(r,x^{t,x,v'}_r),v'_r),\varphi(x))drdx\\
&+&\int_{R^d}\int^T_s(g(r,x^{t,x,v'}_r,V(r,x^{t,x,v'}_r),\sigma^\ast\nabla V(r,x^{t,x,v'}_r)),\varphi(x))dBrdx\nonumber\\
&-&\int_{R^d}\int^T_s(\sigma^\ast\nabla
V(r,x^{t,x,v'}_r),\varphi(x))dW_rdx.
\end{eqnarray}
This is equivalent to
\begin{eqnarray}\label{eq2.347}
&&\int_{R^d}(V(s,x),\varphi(s,x))dx-\varepsilon'\nonumber\\
&\leq&
\int_{R^d}(h(x),\varphi(T,x))dx+\int_{R^d}\int^T_s(f(r,x,V(r,x),\sigma^\ast\nabla V(r,x),v'_r),\varphi(r,x))drdx\nonumber\\
&+\!&\!\!\!\!\!\!\int_{R^d}\!\int^T_s\!\!(g(r,x,V(r,x),\!\sigma^\ast\nabla V(r,x)),\varphi(r,x))dBrdx\!-\!\!\!\int_{R^d}\!\int^T_s\!\!(\sigma^\ast\nabla V(r,x),\!\varphi(r,x))dW_rdx.
            \end{eqnarray}
Taking (\ref{eq2.344}) into (\ref{eq2.347}), we obtain
\begin{eqnarray}\label{eq2.348}
&&\int_{R^d}\int^T_s(V(r,x),\partial_r\varphi(r,x))drdx+\int_{R^d}(V(s,x),\varphi(s,x))dx-\varepsilon'\nonumber\\
&\leq&\int_{R^d}(h(x),\varphi(T,x))dx+\int_{R^d}\int^T_s(f(r,x,V(r,x),\sigma^\ast\nabla
V(r,x),v'_r),\varphi(r,x))drdx\nonumber\\
&+&\!\!\!\!\int_{R^d}\!\int^T_s\!(g(r,x,V(r,x),\sigma^\ast
\nabla V(r,x)),\varphi(r,x))dBrdx+\!\!\int_{R^d}\!\int^T_s\!(\mathcal {L}^{v'}_rV(r,x),\varphi(r,x))drdx.
\end{eqnarray}
Uniqueness:
Let $\overline{V}$ be another solution of the PDE (\ref{eq2.77}). By
 Definition 4.1, one gets that for any $v\in \mathcal {U}$,
\begin{eqnarray}\label{eq2.349}
&&\int_{R^d}\int^T_s(\overline{V}(r,x),\partial_r\varphi(r,x))drdx+\int_{R^d}(\overline{V}(s,x),\varphi(s,x))dx\nonumber\\
&\geq&\int_{R^d}(h(x),\varphi(T,x))dx+\int_{R^d}\int^T_s(f(r,x,\overline{V}(r,x),\sigma^\ast\nabla
\overline{V}(r,x),v_r),\varphi(r,x))drdx\nonumber\\
&+&\int_{R^d}\int^T_s(g(r,x,,\overline{V}(r,x),\sigma^\ast\nabla
\overline{V}(r,x)),\varphi(r,x))dBrdx\nonumber\\
&+&\int_{R^d}\int^T_s(\mathcal
{L}_t^v\overline{V}(r,x),\varphi(r,x))drdx.
\end{eqnarray}
By Lemma 4.5 in \cite{OT2006}, we have
\begin{eqnarray}\label{eq2.350}
&&\int_{R^d}\int^T_s(\overline{V}(r,x),\partial_r\varphi(r,x))drdx\nonumber\\
&=&\int_{R^d}\Sigma^d_{j=1}\int^T_s(\Sigma^d_{i=1}(\sigma_{i,j}\frac{\partial\overline{V}}{\partial
x_i}(r,x),\varphi(r,x)))dW_rdx+\int_{R^d}\int^T_s(L^v_r\overline{V}(r,x),\varphi(r,x))drdx\nonumber\\
&=&\int_{R^d}\int^T_s((\sigma^\ast\nabla\overline{V})(r,x),\varphi(r,x))dW_rdx+\int_{R^d}\int^T_s(L^v_r\overline{V}(r,x),\varphi(r,x))drdx.
\end{eqnarray}
Taking (\ref{eq2.350}) into (\ref{eq2.349}), we get
\begin{eqnarray}
&&\int_{R^d}(\overline{V}(s,x),\varphi(s,x))dx+\int_{R^d}\int^T_s(\sigma^\ast\nabla\overline{V})(r,x)\varphi(r,x)dW_rdx\nonumber\\
&+&\int_{R^d}\int^T_s(L^v_r\overline{V}(r,x),\varphi(r,x))drdx\nonumber\\
&\geq&\int_{R^d}(h(x),\varphi(T,x))dx+\int_{R^d}\int^T_s(f(r,x,\overline{V}(r,x),\sigma^\ast\nabla
\overline{V}(r,x),v_r),\varphi(r,x))drdx\nonumber\\
&+&\int_{R^d}\int^T_s(g(r,x,,\overline{V}(r,x),\sigma^\ast\nabla
\overline{V}(r,x)),\varphi(r,x))dBrdx+\int_{R^d}\int^T_s(L^v_r\overline{V}(r,x),\varphi(r,x))drdx.\nonumber
\end{eqnarray}
So
\begin{eqnarray}\label{eq2.351}
&&\int_{R^d}(\overline{V}(s,x),\varphi(s,x))dx\nonumber\\
&\geq&\int_{R^d}(h(x),\varphi(T,x))dx+\int_{R^d}\int^T_s(f(r,x,\overline{V}(r,x),\sigma^\ast\nabla \overline{V}(r,x),v_r),\varphi(r,x))drdx\nonumber\\
&+&\int_{R^d}\int^T_s(g(r,x,,\overline{V}(r,x),\sigma^\ast\nabla
\overline{V}(r,x)),\varphi(r,x))dBrdx\nonumber\\
&-&\int_{R^d}\int^T_s(\sigma^\ast\nabla\overline{V})(r,x)\varphi(r,x)dW_rdx.
\end{eqnarray}
\noindent
By  Definition 4.1. we also have that, for any small $\varepsilon>0$,
there exists a control $v'\in\mathcal {U}$, we have
\begin{eqnarray}\label{eq2.352}
&&\int_{R^d}\int^T_s(\overline{V}(r,x),\partial_r\varphi(r,x))drdx+\int_{R^d}(\overline{V}(s,x),\varphi(s,x))dx-\varepsilon\nonumber\\
&\leq&\int_{R^d}(h(x),\varphi(T,x))dx+\int_{R^d}\int^T_s(f(r,x,\overline{V}(r,x),\sigma^\ast\nabla
\overline{V}(r,x),v'_r),\varphi(r,x))drdx\nonumber\\
&+&\int_{R^d}\int^T_s(g(r,x,,\overline{V}(r,x),\sigma^\ast\nabla
\overline{V}(r,x)),\varphi(r,x))dBrdx\nonumber\\
&+&\int_{R^d}\int^T_s(\mathcal
{L}^{v'}_r\overline{V}(r,x),\varphi(r,x))drdx.
\end{eqnarray}
Taking (\ref{eq2.350}) into (\ref{eq2.352}), we have
\begin{eqnarray}\label{eq2.353}
&&\int_{R^d}(\overline{V}(s,x),\varphi(s,x))dx-\varepsilon\nonumber\\
&\leq&\int_{R^d}(h(x),\varphi(T,x))dx+\int_{R^d}\int^T_s(f(r,x,\overline{V}(r,x),\sigma^\ast\nabla \overline{V}(r,x),v'_r),\varphi(r,x))drdx\nonumber\\
&+&\int_{R^d}\int^T_s(g(r,x,,\overline{V}(r,x),\sigma^\ast\nabla
\overline{V}(r,x)),\varphi(r,x))dBrdx\nonumber\\
&-&\int_{R^d}\int^T_s(\sigma^\ast\nabla\overline{V})(r,x)\varphi(r,x)dW_rdx.
\end{eqnarray}
Let us make the change of variable $y=\widehat{x}^{t,x,v}_r$ in each
term of (\ref{eq2.351}), then
\begin{eqnarray}\int_{R^d}(\overline{V}(s,x),\varphi(s,x))dx=\int_{R^d}(\overline{V}(s,x^{t,y,v}_s),\varphi(y))dy,\end{eqnarray}
\begin{eqnarray}\int_{R^d}(h(x),\varphi(T,x))dx=\int_{R^d}(h(x^{t,y,v}_T),\varphi(y))dy,\end{eqnarray}
\begin{eqnarray}
&&\int_{R^d}\int^T_s(f(r,x,\overline{V}(r,x),\sigma^\ast\nabla
\overline{V}(r,x),v_r),\varphi(r,x))drdx\nonumber\\
&=&\int_{R^d}\int^T_s(f(r,x^{t,y,v}_r,\overline{V}(r,x^{t,y,v}_r),(\sigma^\ast\nabla
\overline{V})(r,x^{t,y,v}_r),v_r),\varphi(y))drdy,
\end{eqnarray}
\begin{eqnarray}
&&\int_{R^d}\int^T_s(g(r,x,,\overline{V}(r,x),\sigma^\ast\nabla
\overline{V}(r,x)),\varphi(r,x))dBrdx\notag\\
&=&\int_{R^d}\int^T_s(g(r,x^{t,y,v}_r,,\overline{V}(r,x),\sigma^\ast\nabla
\overline{V}(r,x)),\varphi(y))dBrdy.
\end{eqnarray}
\begin{eqnarray}\int_{R^d}\int^T_s((\sigma^\ast\nabla\overline{V})(r,x),\varphi(r,x))dW_rdx=\int_{R^d}\int^T_s((\sigma^\ast\nabla\overline{V})(r,x^{t,y,v}_r),\varphi(y))dW_rdy.
\end{eqnarray}
So (\ref{eq2.351}) becomes
\begin{eqnarray}
&&\int_{R^d}\overline{V}(s,x^{t,y,v}_s)\varphi(y)dy\nonumber\\
&\geq&\int_{R^d}h(x^{t,y,v}_T)\varphi(y)dy\nonumber+\int^T_s\int_{R^d}(f(r,x^{t,y,v}_r,\overline{V}(r,x^{t,y,v}_r),(\sigma^\ast\nabla\overline{V})(r,x^{t,y,v}_r),v_r),\varphi(y))dydr\nonumber\\
&+&\int_{R^d}\int^T_s(g(r,x^{t,y,v}_r,,\overline{V}(r,x),\sigma^\ast\nabla
\overline{V}(r,x)),\varphi(y))dBrdy\nonumber\\
&-&\int^T_s\int_{R^d}((\sigma^\ast\nabla\overline{V})(r,x^{t,y,v}_r),\varphi(y))dydW_r.
\end{eqnarray}
Since $\varphi$ is arbitrary, we have proven that for a.e. $y$,
\begin{eqnarray}\label{eq2.354}
&&\overline{V}(s,x^{t,y,v}_s)\geq
h(x^{t,y,v}_T)+\int^T_sf(r,x^{t,y,v}_r,\overline{V}(r,x^{t,y,v}_r),(\sigma^\ast\nabla\overline{V})(r,x^{t,y,v}_r),v_r)dr\nonumber\\
&&+\int^T_sg(r,x^{t,y,v}_r,\overline{V}(r,x^{t,y,v}_r),(\sigma^\ast\nabla\overline{V})(r,x^{t,y,v}_r))dBr-\int^T_s(\sigma^\ast\nabla\overline{V})(r,x^{t,y,v}_r)dW_r.
\end{eqnarray}
Let
\begin{eqnarray}&&\overline{y}^{t,y,v}_s=h(x^{t,y,v}_T)+\int^T_sf(r,x^{t,y,v}_r,\overline{V}(r,x^{t,y,v}_r),(\sigma^\ast\nabla\overline{V})(r,x^{t,y,v}_r),v_r)dr\nonumber\\
&&+\int^T_sg(r,x^{t,y,v}_r,\overline{V}(r,x^{t,y,v}_r),(\sigma^\ast\nabla\overline{V})(r,x^{t,y,v}_r))dBr-\int^T_s(\sigma^\ast\nabla\overline{V})(r,x^{t,y,v}_r)dW_r.
\end{eqnarray}
Then
\begin{eqnarray}
\overline{V}(s,x^{t,y,v}_s)&=&
h(x^{t,y,v}_T)+\int^T_sf(r,x^{t,y,v}_r,\overline{V}(r,x^{t,y,v}_r),(\sigma^\ast\nabla\overline{V})(r,x^{t,y,v}_r),v_r)dr\nonumber\\
&+&\int^T_sg(r,x^{t,y,v}_r,\overline{V}(r,x^{t,y,v}_r),(\sigma^\ast\nabla\overline{V})(r,x^{t,y,v}_r))dBr+(\overline{V}(s,x^{t,y,v}_s)-\overline{y}^{t,y,v}_s)\notag\\&-&\int^T_s(\sigma^\ast\nabla\overline{V})(r,x^{t,y,v}_r)dW_r.
\end{eqnarray}
Here $\overline{V}(s,x^{t,y,v}_s)-\overline{y}^{t,y,v}_s\geq 0$, so
by the comparison theorem of the BDSDE, we know that the
$\overline{V}(r,x^{t,y,v}_r)$ is the g-supersolution of the BSDE
(\ref{eq2.74}). So we have
\begin{eqnarray}\label{eq2.355}
\overline{V}(s,x^{t,y,v}_s)\geq Y^{s,x^{t,y,v}_s,v}_s.
\end{eqnarray}
Let us make the same change of variable $y=\widehat{x}^{t,x,v'}_r$
in each term of (\ref{eq2.353}), so (\ref{eq2.353}) becomes
\begin{eqnarray}
&&\int_{R^d}\overline{V}(s,x^{t,y,v'}_s)\varphi(y)dy-\varepsilon\nonumber\\
&\leq&\int_{R^d}h(x^{t,y,v'}_T)\varphi(y)dy\nonumber
+\int^T_s\int_{R^d}(f(r,x^{t,y,v'}_r,\overline{V}(r,x^{t,y,v'}_r),(\sigma^\ast\nabla\overline{V})(r,x^{t,y,v'}_r),v'_r),\varphi(y))dydr\nonumber\\
&+&\int^T_s\int_{R^d}(g(r,x^{t,y,v'}_r,\overline{V}(r,x^{t,y,v'}_r),(\sigma^\ast\nabla\overline{V})(r,x^{t,y,v'}_r)),\varphi(y))dydBr\\
&-&\int^T_s\int_{R^d}(\sigma^\ast\nabla\overline{V})(r,x^{t,y,v'}_r)\varphi(y)dydW_r.\nonumber
\end{eqnarray}
Since $\varphi$ is arbitrary, we have proven that for
almost every $y$
\begin{eqnarray}\label{eq2.356}
&&\overline{V}(s,x^{t,y,v'}_s)-\frac{\varepsilon}{\int_{R^d}\varphi(y)dy}\nonumber\\
&\leq&h(x^{t,y,v'}_T)+\int^T_sf(r,x^{t,y,v'}_r,\overline{V}(r,x^{t,y,v'}_r),(\sigma^\ast\nabla\overline{V})(r,x^{t,y,v'}_r),v'_r)dr\nonumber\\
&+&\int^T_sg(r,x^{t,y,v'}_r,\overline{V}(r,x^{t,y,v'}_r),(\sigma^\ast\nabla\overline{V})(r,x^{t,y,v'}_r))dBr-\int^T_s(\sigma^\ast\nabla\overline{V})(r,x^{t,y,v'}_r)dW_r.
\end{eqnarray}
 Let
$\widetilde{V}(s,x^{t,y,v'}_s)=\overline{V}(s,x^{t,y,v'}_s)-\frac{\varepsilon}{\int_{R^d}\varphi(y)dy}$.

\noindent
Then
\begin{eqnarray}\label{eq2.358}
&&\widetilde{V}(s,x^{t,y,v'}_s)+\frac{\varepsilon}{\int_{R^d}\varphi(y)dy}\nonumber\\
&\leq&h(x^{t,y,v'}_T)+\int^T_sf(r,x^{t,y,v'}_r,\widetilde{V}(r,x^{t,y,v'}_r)+\frac{\varepsilon}{\int_{R^d}\varphi(y)dy},(\sigma^\ast\nabla\widetilde{V})(r,x^{t,y,v'}_r),v'_r)dr\nonumber\\
&+&\int^T_sg(r,x^{t,y,v'}_r,\widetilde{V}(r,x^{t,y,v'}_r)+\frac{\varepsilon}{\int_{R^d}\varphi(y)dy},(\sigma^\ast\nabla\widetilde{V})(r,x^{t,y,v'}_r))dBr\notag\\
&-&\int^T_s(\sigma^\ast\nabla\widetilde{V})(r,x^{t,y,v'}_r)dW_r+\frac{\varepsilon}{\int_{R^d}\varphi(y)dy}.
\end{eqnarray}
Define
\begin{eqnarray}
K^{t,y,v'}
&=&h(x^{t,y,v'}_T)+\int^T_sf(r,x^{t,y,v'}_r,\widetilde{V}(r,x^{t,y,v'}_r)+\frac{\varepsilon}{\int_{R^d}\varphi(y)dy},(\sigma^\ast\nabla\widetilde{V})(r,x^{t,y,v'}_r),v'_r)dr\nonumber\\
&+&\int^T_sg(r,x^{t,y,v'}_r,\widetilde{V}(r,x^{t,y,v'}_r)+\frac{\varepsilon}{\int_{R^d}\varphi(y)dy},(\sigma^\ast\nabla\widetilde{V})(r,x^{t,y,v'}_r))dBr\\
&-&\int^T_s(\sigma^\ast\nabla\widetilde{V})(r,x^{t,y,v'}_r)dW_r+\frac{\varepsilon}{\int_{R^d}\varphi(y)dy}.\nonumber
\end{eqnarray}
By  (\ref{eq2.358}), we can know that
\begin{eqnarray}\label{eq2.359}
&&\widetilde{V}(s,x^{t,y,v'}_s)+\frac{\varepsilon}{\int_{R^d}\varphi(y)dy}\nonumber\\
&&=h(x^{t,y,v'}_T)-(K^{t,y,v'}-\widetilde{V}(s,x^{t,y,v'}_s)-\frac{\varepsilon}{\int_{R^d}\varphi(y)dy})\nonumber\\
&&+\int^T_sf(r,x^{t,y,v'}_r,\widetilde{V}(r,x^{t,y,v'}_r)+\frac{\varepsilon}{\int_{R^d}\varphi(y)dy},(\sigma^\ast\nabla\widetilde{V})(r,x^{t,y,v'}_r),v'_r)dr\nonumber\\
&&+\int^T_sg(r,x^{t,y,v'}_r,\widetilde{V}(r,x^{t,y,v'}_r)+\frac{\varepsilon}{\int_{R^d}\varphi(y)dy},(\sigma^\ast\nabla\widetilde{V})(r,x^{t,y,v'}_r))dBr \notag\\
&&-\int^T_s(\sigma^\ast\nabla\widetilde{V})(r,x^{t,y,v'}_r)dW_r+\frac{\varepsilon}{\int_{R^d}\varphi(y)dy}.
\end{eqnarray}
Because
$K^{t,y,v'}-\widetilde{V}(s,x^{t,y,v'}_s)-\frac{\varepsilon}{\int_{R^d}\varphi(y)dy}\geq
0$, so by the comparison theorem of BDSDEs, we knows that
$$\widetilde{V}(s,x^{t,y,v'}_s)+\frac{\varepsilon}{\int_{R^d}\varphi(y)dy}\leq Y^{s,x^{t,y,v'}_s,v'}_s+\frac{\varepsilon}{\int_{R^d}\varphi(y)dy}.$$
So
\begin{eqnarray}\label{eq2.360}
\overline{V}(s,x^{t,y,v'}_s)\leq
Y^{s,x^{t,y,v'}_s,v'}_s+\frac{\varepsilon}{\int_{R^d}\varphi(y)dy}.
\end{eqnarray}
Finally combining (\ref{eq2.355}) and (\ref{eq2.360}), we know that
$$
\overline{V}(t,y)= \sup_{v\in\mathcal {U}}Y^{t,y,v}_t.$$

Thus $\overline{V}(t,y)$ is also the value of $\sup_{v\in\mathcal
{U}}J(t,y,v)$, from uniqueness of the solution of cost functional
and the uniqueness of supremum, we get uniqueness of weak solution
for PDEs (\ref{eq2.77}), i.e. $\overline{V}(t,x)=V(t,x)$.$
$
\end{proof}



\end{document}